\documentclass[11pt,a4paper, parskip=half]{scrartcl} 
\usepackage[utf8]{inputenc}
\usepackage[english]{babel}
\usepackage[T1]{fontenc}
\usepackage{lmodern}
\usepackage[left=3cm,right=3cm,top=3cm,bottom=3cm]{geometry}
	\setlength{\parindent}{15pt}		

\usepackage[runin]{abstract}
    \abslabeldelim{.}

\usepackage{todonotes}

\usepackage{amsmath, amsfonts, amssymb, amsthm, amstext}
\usepackage{mathtools}
\usepackage{mathrsfs}
\usepackage{latexsym}
\usepackage{mathdots}
\usepackage{cases}

\usepackage{graphicx}
\usepackage{subcaption}
\usepackage{algorithm}

 at11pt
 at9pt

\usepackage{enumitem}

\usepackage{color}

\usepackage{hyperref} 
\usepackage[capitalise]{cleveref}
	
\newcounter{thm}
\numberwithin{thm}{section}
\numberwithin{equation}{section}

	\newtheoremstyle{myplain}		
			{}			
			{}			
			{\itshape}				
			{}				
			{\sffamily\bfseries}				
			{.}		
			{ }				
			{\thmname{#1}\thmnumber{ #2}\textnormal{\textsf{\thmnote{ (#3)}}}}			
    \newtheoremstyle{mybreak}
            {}{}{}{}{\sffamily\bfseries}{.}{\newline}
            {\thmname{#1}\thmnumber{ #2}\textnormal{\textsf{\thmnote{ (#3)}}}}
	\newtheoremstyle{mydef}
			{}{}{}{}{\sffamily\bfseries}{.}{ }
			{\thmname{#1}\thmnumber{ #2}}
	\newtheoremstyle{myrem}
			{}{}{}{}{\sffamily\itshape}{.}{ }
			{\thmname{#1}\thmnumber{ #2}}

\theoremstyle{myplain}
	\newtheorem{theorem}[thm]{Theorem}

\theoremstyle{mybreak}
\theoremstyle{mydef}
	\newtheorem{definition}[thm]{Definition}
	\newtheorem{remark}[thm]{Remark}
\theoremstyle{mydef}

	\newcommand{\cc}{\mathbb{C}}
		
		\newcommand{\nn}{\mathbb{N}}
	\newcommand{\rr}{\mathbb{R}}

\allowdisplaybreaks

\def\sumprime_#1^#2{
    \setbox0=\hbox{$\scriptstyle{#1}$}
    \setbox1=\hbox{$\scriptstyle{#2}$}
    \setbox2=\hbox{$\displaystyle{\sum}$}
    \setbox4=\hbox{${}^\prime\mathsurround=0pt$}
    \dimen0=.5\wd0 \advance\dimen0 by-.5\wd2
    \ifdim\dimen0>0pt
        \ifdim\dimen0>\wd4 \kern\wd4
        \else\kern\dimen0
        \ifdim\dimen1>\wd4 \kern\wd4
        \else\kern\dimen1
    \fi\fi\fi
\mathop{{\sum}^\prime}_{\kern-\wd4 #1}^{\kern-\wd4 #2}
}
%




\title{\Large  Neural network-based singularity detection \\ 
and applications}
\author{Nadiia Derevianko\footnote{TUM School of CIT, 
Department of Computer Science,
Boltzmannstrasse 3,
85748 Garching b. München,
Germany,  nadiia.derevianko@tum.de} \footnote{Corresponding author}, 
Ioannis G. Kevrekidis \footnote{Departments of Chemical and Biomolecular Engineering   and Applied Mathematics and Statistics, Johns Hopkins University, Baltimore, MD, USA,  yannisk@jhu.edu}, 
Felix Dietrich \footnote{TUM School of CIT, MDSI, \& MCML,
Department of Computer Science,
Boltzmannstrasse 3,
85748 Garching b. München,
Germany,  felix.dietrich@tum.de} }

\date{\today}

\begin{document}

\maketitle

\begin{abstract}
We present a method for constructing a special type of shallow neural network that learns univariate meromorphic functions with pole-type singularities. Our method is based on using a finite set of Laurent coefficients 
as input information, which we compute by FFT, employing values of the investigated function on some contour $\Gamma$ in the complex plane. 
The primary components of our methodology are the following: (1) the adaptive construction of rational polynomial 
activation functions, (2) a novel backpropagation-free method for determining the weights and biases of the hidden layer, and (3) the computation of the weights and biases of the output layer through least-squares fitting. 
Breaking with the idea of ``safe'' rational activation functions, we introduce a rational activation function as a meromorphic function with a single pole situated within the domain of investigation. Employing the weights and biases of the hidden layer, we then scale and shift the pole of the activation function to find the estimated locations of the singularities; this implies that the number of neurons in the hidden layer is determined by the number of singularities of the function that is being approximated. 
While the weights and biases of the hidden layer are tuned so as to capture the singularities, the least-squares fitting for the computation of weights and biases of the output layer ensures approximation of the function in the rest of the domain. Through the use of Laurent-Padé rational approximation concepts, we prove locally uniform convergence of our method. We illustrate the effectiveness of our method through numerical experiments, including the construction  of  extensions of the time-dependent solutions of nonlinear autonomous PDEs into the complex plane, and study the dynamics of their singularities.

\textbf{Keywords:} Meromorphic functions, pole-type singularities, shallow neural network,  Laurent-Padé approximation, rational functions, nonlinear PDEs \\

\textbf{Mathematics Subject:} 	30D30, 	30E10, 41A20,  65D15
\end{abstract}

\section{Introduction}

In this paper, we discuss a method to construct parameters of neural networks that approximate functions $f:\mathbb{C}\to\mathbb{C}$ with pole-type singularities. The main idea is to choose a rational function as approximant of $f$, and to reformulate this approximant into a neural network with a rational activation function and neurons arranged in a single hidden layer (see Figure \ref{nns}). We introduce a new approach to explicitly compute the parameters of these neurons from the parameters of the rational approximant, without iterations, gradient descent, or backpropagation. We then linearly combine the neurons with parameters of the output layer, computed by minimizing a least-squares problem. As a method for rational approximation, we choose the Laurent-Pad\'{e} method because this way the location of pole-type singularities can be estimated. The Laurent-Pad\'{e} method also allows us to prove locally uniform convergence of the neural network approximants.

The question of detection of singularities and approximation of functions with singularities has been considered over many years and by many authors. We refer the interested reader to  \cite{EF2015, L2018, K17,  M92}   for more details  regarding different approaches, and especially to \cite{CC20,FH19,R18,W03,W22} regarding methods based on rational  Laurent-Padé approximation \cite{B87, BR2015, GGT13}.

Rational functions have been also shown to be an effective tool in signal processing \cite{BG24, KG15}, in developing of univariate \cite{BC2020, DPP21} and multivariate \cite{DH25} methods for exponential analysis, in approximating spectral filters in  graph convolutional networks \cite{CCL2018}, etc. Rational functions are also employed as so-called ``adaptive activation functions'' in the definition of neural networks  \cite{BNT20, MSK, Tel17} for learning functions $f: \rr^d \rightarrow \rr$. These rational activation functions are defined as ``safe'' Padé
Activation Units (PAUs) 
$
r(x)=\sum_{j=0}^m \alpha_j x^j  \bigg/ \left( 1+\left| \sum_{j=1}^n \gamma_j x^j \right| \right),
$
preventing the occurrence of poles and enabling safe computation on $\rr$. Here, the coefficients $\alpha_j, \gamma_j \in \rr$ 
are trainable parameters, determined together with the weights and biases of the neural network
by backpropagation and a gradient descent optimization algorithm. A good choice for initialization parameters of the activation functions is obtained when $r(x)$ are constructed as best uniform rational approximations of type $(m,n)$ to the ReLU function in $[-1,1]$, i.e. the activation functions are continuous.  
An advantage of PAUs is that they enhance the predictive performance of neural networks and offer greater flexibility. In \cite{Tel17, BNT20} is presented the theoretical benefit of using neural networks based on rational activation functions due to their superiority over ReLU in approximating functions. In \cite{BNT20, MSK}, the authors also demonstrate that rational neural networks outperform ReLU-based neural networks for the solutions of certain PDEs and for the classification tasks of various datasets through numerical experiments.

In \cite{P22}, the author developed a new training method for rational neural networks. 
She considered a neural network $\Psi: \rr^d \rightarrow \rr$ (in modified form without output layer) 
given by $\Psi(\boldsymbol{x})=r(\mathbf{W} \boldsymbol{x} - b )$  with  degree-one
rational activation function
$
    r(x)=\frac{\alpha_0+\alpha_1 x}{\gamma_0+\gamma_1 x}$ and $\gamma_0+\gamma_1 x>0$.
 Next, the loss function was defined as follows
$$
\mathcal{L}(\mathbf{W},b)=\max\limits_{k=1,\dots,N}\left|y^{(k)}- \frac{\alpha_0+\alpha_1 (\mathbf{W} \boldsymbol{x}^{(k)} - b )}{\gamma_0+\gamma_1 (\mathbf{W} \boldsymbol{x}^{(k)} - b )}\right|,
$$
where $\gamma_0+\gamma_1 (\mathbf{W} \boldsymbol{x}^{(k)} - b )>0$ for each $k=1,\dots,N$ and $(\boldsymbol{x}^{(k)},y^{(k)}) \in \rr^{d \times 1}$, $k=1,\dots,N$ is the training data set. Finally, the weights $\mathbf{W} \in \rr^{d \times 1}$ and the bias $b \in \rr$ of the network were computed as decision variables of the rational approximation problem. As methods for rational approximation, the  author of \cite{P22} chose the bisection method and the differential correction algorithm. 
Further, this new approach was applied to the binary classification task of the TwoLeadECG dataset from the PhysioNet database  \cite{GP2000}.
Numerical experiments suggested  that the proposed method  gives better classification accuracy when the training dataset is either very small or if the classification classes are imbalanced. 

\begin{figure}[h!]
  \centering
    \includegraphics[width=0.25\linewidth]{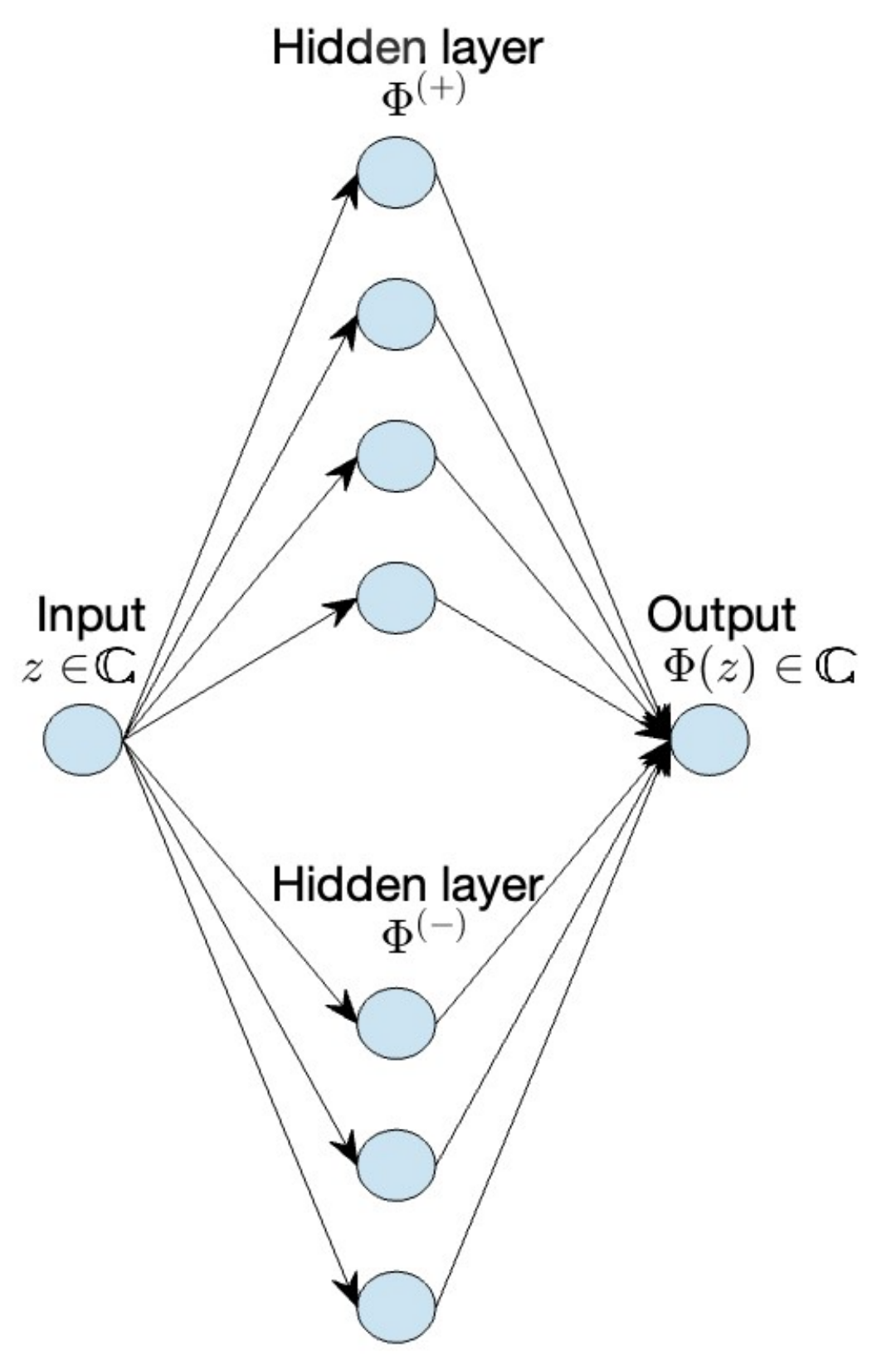}
      \caption{Diagram of the neural network $\Phi$ as in (\ref{nn}) for the case when the function $f$ under consideration has 4 singularities located in the annulus $\Tilde{\mathcal{A}}^{(+)}$ that are detected by the neural network component $\Phi^{(+)}$ and 3 singularities located in the annulus $\Tilde{\mathcal{A}}^{(-)}$ that are detected by the neural network component $\Phi^{(-)}$.}
  \label{nns}
\end{figure}

To our knowledge, a construction of a rational neural network with ``active poles''   (poles located within the investigated domain) has not been presented yet. We relax
the restriction $\gamma_0+\gamma_1 x>0$ given in safe PAUs by controlling the pole $-\gamma_0/\gamma_1$ of the activation function (we consider activation functions with a single pole each) by weights and biases of the hidden layer,  
and develop a neural network for  approximation of functions with pole-type singularities. Our neural network is a discontinuous function itself and, in the limiting case (see convergence Theorem \ref{cont}), singularities of the neural network capture singularities of the function under consideration.
 We also would like to draw attention to the paper \cite{JK2020}, which utilized physics-informed neural networks with continuous trainable activation functions to approximate jump discontinuities of discontinuous functions. 
Numerical results demonstrate that the neural network is capable of accurately capturing the jump locations; however, the neural network approximant is a continuous function itself.

The exact class of functions $\mathcal{M}$ (see Definition \ref{defm}) with pole-type singularities that we want to approximate are functions $f:\mathbb{C}\to\mathbb{C}$ of a single complex variable analytic in the annulus $\mathcal{A}:=\{z \in \mathbb{C}: r< |z|< R \}$ and with meromorphic extension in the annulus  $\Tilde{\mathcal{A}}:=\{z \in \mathbb{C}: \Tilde{r}< |z|< \Tilde{R} \}$ for some $0<\Tilde{r}<r<R<\Tilde{R}<\infty$.  For $f\in \mathcal{M}$, 
we construct a neural network $\Phi:\mathbb{C}\to\mathbb{C}$ in the form $\Phi(z)=\Phi^{(+)}(z)+\Phi^{(-)}(z)$ as in (\ref{nn}), where the neural network component  $\Phi^{(+)}$ is responsible for approximation of $f^{(+)}$ in the set  $\{z \in \mathbb{C}: |z|< R \}$  and detects singularities in the annulus $\Tilde{\mathcal{A}}^{(+)}:=\{z \in \mathbb{C}: R\leq |z|< \Tilde{R} \}$, and the component $\Phi^{(-)}$  approximates $f^{(-)}$ in the set $\{z \in \mathbb{C}: |z|> r \}$ and detects singularities in the annulus  $\Tilde{\mathcal{A}}^{(-)}:=\{z \in \mathbb{C}: \Tilde{r}< |z|\leq r \}$. Note also that it is customary in machine learning to consider neural networks with real-valued parameters, while in our case all parameters that determine $\Phi$ as well as its inputs and outputs are complex. This is required by the application of the Laurent-Pad\'{e} method for rational approximation. For more information on various methods for complex neural networks, we refer the interested reader to \cite{CL22, L22}.

Our methodology is comparable to that of \cite{P22} in the way that both methods involve the reformulation of rational approximants into neural networks with rational activation functions and the utilization of parameters of rational approximants to calculate the weights and biases of the hidden layer(s). There are, however, two significant distinctions. The first is the application of various techniques for  rational approximation. For good approximation of continuous functions and convergence in the maximum norm, the author of \cite{P22} uses the bisection method and the differential correction algorithm. We, on the other hand, apply the Laurent-Pad\'{e} method, which is a direct computation that ensures locally uniform convergence and includes detection of pole-type singularities. Another difference is that in \cite{P22}, the author uses a \textit{continuous} degree-one rational activation function, i.e. a safe PAU. We allow our activation functions to have a single pole each in the domain under investigation, with the goal to further employ this pole to detect the approximate locations of singularities. Note also that we are pioneering in this paper by introducing a method based on \textit{``unsafe''} Padé Activation Units, as we are not aware of any other paper in this regard.

 \begin{figure}[h!]
  \centering
    \includegraphics[width=0.75\linewidth]{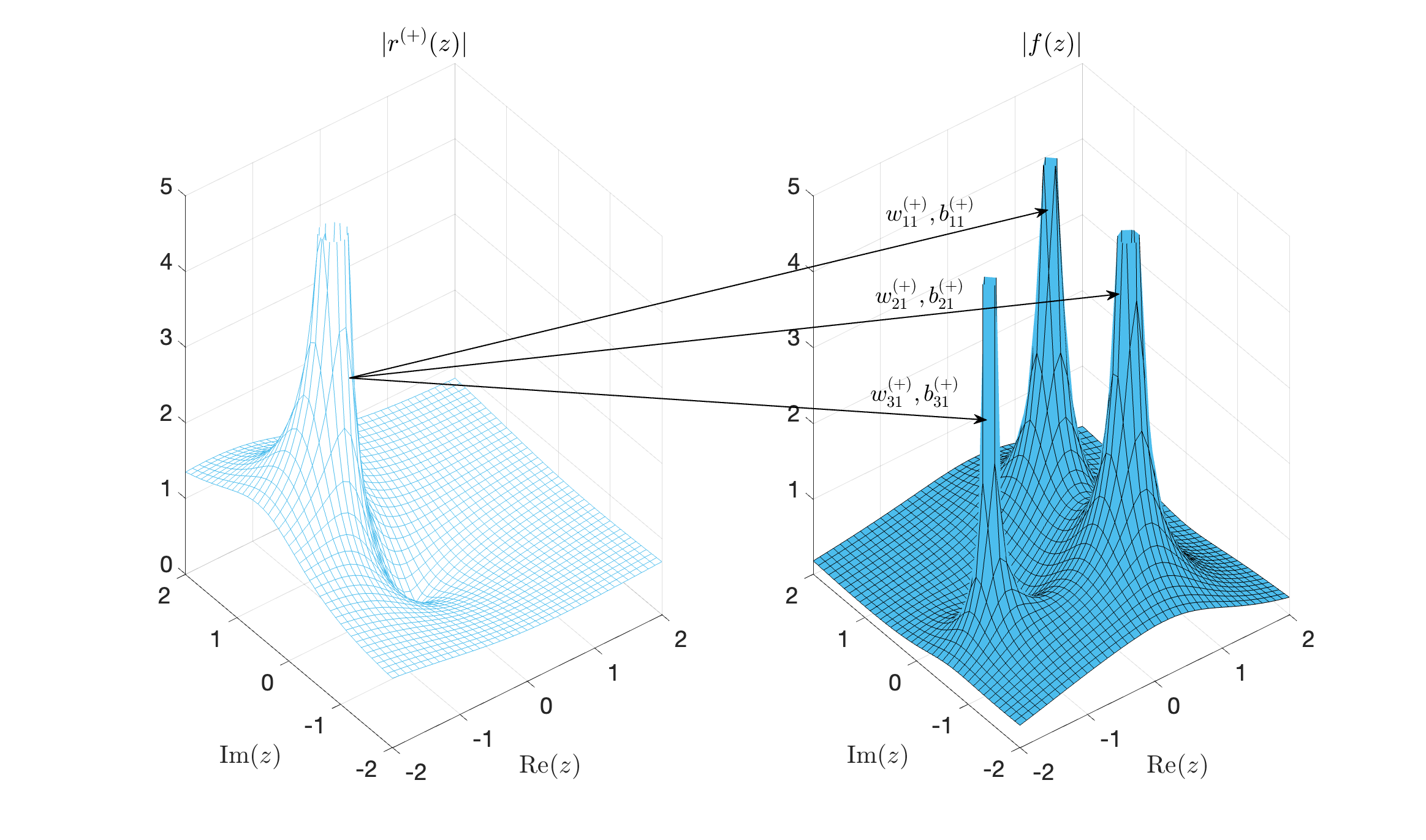}
      \caption{Left: an activation function $r^{(+)}(z)$ as a meromorphic function  with a single pole, right: a function $f(z)$ to approximate, with 3 poles in the annulus $\Tilde{\mathcal{A}}^{(+)}$. The estimated location of each singularity is detected by scaling and shifting the pole of the activation function by weights and biases, respectively, of the hidden layer of the component $\Phi^{(+)}$.}
  \label{figmain}
\end{figure}

According to our method, components $\Phi^{(\pm)}$ defined by (\ref{f1})--(\ref{f2}) are constructed separately.  
Our algorithm utilizes a finite set of Laurent coefficients of the investigated function $f \in \mathcal{M}$ as input data. These coefficients are determined through FFT, using values of $f$ on some contour $\Gamma$ in the complex plane. The choice of the activation function $r^{(\pm)}$ as in (\ref{r1}) is a crucial component of our method.
 We present an adaptive construction of the activation functions $r^{(\pm)}$  as rational Laurent-Pad\'{e} approximations of some special type $(N^{(\pm)}+1-M^{(\pm)},1)$  to $\frac{\varphi^{(\pm)}(z)}{z-z_0^{(\pm)}}$, where $\varphi^{(\pm)}(z)$ are analytic in $ \cc$ and poles $z_0^{(\pm)} \in \cc$ are located outside the unit circle.   Thus, our activation functions are meromorphic functions themselves, with one singularity each. Note that it is also possible to use the same activation function $r^{(\pm)}=r$ for  both  components $\Phi^{(\pm)}$. Parameters $M^{(\pm)}$ determine the numbers of the neurons in the hidden layers of $\Phi^{(\pm)}$, which also corresponds to the numbers of singularities in the annuli $\Tilde{\mathcal{A}}^{(\pm)}$, i.e. our method is based on the idea of \textit{``one singularity, one neuron''}  (see Figure \ref{nns}).

Weights and biases of the hidden layers of the components $\Phi^{(\pm)}$ are determined via our new backpropagation-free approach. Scaling and shifting the poles of the activation functions by weights and biases, respectively, of the hidden layers  to the estimated locations of the poles of the function under consideration is the method by which singularities are detected (see Figure   \ref{figmain}).
   Weights and biases of the output layers  are computed via least-squares fitting, which ensures a good approximation in the rest of the domain. The key idea in the construction of our new neural network is the application of the Laurent-Pad\'{e} method for rational approximation to determine the weights and biases of the components $\Phi^{(\pm)}$. Using this method for rational approximation ensures a good estimation of the location of the singularities and locally uniform convergence of the neural network approximant.

We would also like to  mention very interesting similarities of our proposed approach to another backpropagation-free method, called SWIM (``sample where it matters''), for learning of continuous functions \cite{swim}. The weights and biases of the hidden layer according to SWIM are defined such that we ``learn at the location of largest gradients'', while weights and biases of the hidden layers of  $\Phi^{(\pm)}$ according to our method are determined to capture the location of  singularities of the function that is being approximated. To reduce the number of neurons, in \cite{swim} the authors employ probabilistic arguments and randomly subsample neurons, while for our method to reduce the number of neurons we apply an SVD-based method from \cite{GGT13}.   The weights and biases of the output layers for both methods are determined as solutions to a least-squares problem. Further, in \cite{DD24}, the SWIM method is applied to the solution of time-dependent PDEs with continuous solutions. In this paper, we also present application of our method to the construction of  extensions of the time-dependent solutions of nonlinear PDEs into the complex plane.

\textbf{Outline.}   In Section \ref{le}, we provide the known facts from the theory of meromorphic functions that we  use in the construction of our new neural network $\Phi$. The key concepts of our novel approach are presented in Section \ref{spt}. Namely, in Subsection \ref{ss1},  we discuss ideas for the adaptive  construction of the activation functions $r^{(\pm)}$.  A new backpropagation-free method for the computation of weights and biases of the hidden layers of the neural network components $\Phi^{(\pm)}$ is the subject of Subsection \ref{ss3}. In Subsection \ref{ss4}, we present a method for computing the weights and biases of the output layers of the components $\Phi^{(\pm)}$. We prove the convergence of our neural network approximant in Subsection \ref{cnn}.  In Section \ref{rann}, we present the main steps of the robust numerical method for the construction of the corresponding neural network $\Phi$  and summarize them in Algorithm \ref{alg3}. Finally, in Section \ref{nex}, we demonstrate the efficiency of our novel approach through numerical experiments,  including application to the construction of  extensions of the time-dependent solutions of nonlinear PDEs into the complex plane and to study the dynamics of their singularities.

\textbf{Notation.} As usual, $\nn$, $\rr$ and $\cc$ denote the natural,  real and complex numbers, respectively. Throughout the paper,  we  use the matrix notation $\mathbf{X}=(X_{ij})_{i,j=1}^{m,n} \in \cc^{m \times n}$ for matrices of size $m\times n$, and the vector notation $\boldsymbol{x}=(x_j)_{j=1}^n \in \cc^{ n}$ or $\boldsymbol{x}=(x_1,\ldots,x_n)^T \in \cc^{ n}$ for column vectors of size $n$.  Notation $X^{(\pm)}$ means that we consider two cases, $X^{(+)}$ and $X^{(-)}$.  When we state that $X^{(\pm)}$ are valid for $Y^{(\pm)}$, it means that $X^{(+)}$ is valid for $Y^{(+)}$ and $X^{(-)}$ is valid for $Y^{(-)}$.

\section{Laurent expansion of meromorphic functions}
\label{le}

We now define a class $\mathcal{M}$ of the meromorphic functions under consideration in the paper (see Subsection \ref{ex1} for an example of a function from the class $\mathcal{M}$).
\begin{definition} 
\label{defm}
We say that a function $f:\mathbb{C}\to\mathbb{C}$ belongs to a class $\mathcal{M}$, if $f$ is analytic in the annulus $\mathcal{A}=\{z \in \mathbb{C}: r< |z|< R \}$ and $f$ allows a meromorphic extension in the annulus  $\Tilde{\mathcal{A}}=\{z \in \mathbb{C}: \Tilde{r}< |z|< \Tilde{R} \}$ for some $0<\Tilde{r}<r<R<\Tilde{R}<\infty$. The function $f$ has pole-type singularities located in the annuli $\Tilde{\mathcal{A}}^{(+)}=\{z \in \mathbb{C}: R\leq |z|< \Tilde{R} \}$ and $\Tilde{\mathcal{A}}^{(-)}=\{z \in \mathbb{C}: \Tilde{r}< |z|\leq r \}$. We denote by $s_\ell^{(\pm)}$, $\ell=1,\dots,\kappa^{(\pm)}$ pairwise distinct singularities of $f$ located in the annuli   $\Tilde{\mathcal{A}}^{(\pm)}$. Let $m_\ell^{(\pm)}$ be the multiplicities of $s_\ell^{(\pm)}$, and let the total multiplicities be $M^{(\pm)}:=\sum_{\ell=1}^{\kappa^{(\pm)}} m_\ell^{(\pm)}$ with $M^{(+)}>0$ and/or $M^{(-)}>0$.
\end{definition}

Let $f \in \mathcal{M}$. Then, for $z \in \mathcal{A}$, we represent the Laurent series of $f$ in the form
\begin{align}
     f(z) &=\sum\limits_{k=-\infty}^\infty c_k(f) z^k=f^{(-)}(z)+f^{(+)}(z), \label{ls}
\end{align}
where $f^{(-)}(z):=\sum_{k=-\infty}^{-1} c_k(f) z^k+c_0(f)/2$ is analytic in the set $\{z \in \mathbb{C}: |z|> r \}$ and its singularities are located in the annulus $\Tilde{\mathcal{A}}^{(-)}$, and $f^{(+)}(z):=c_0(f)/2+\sum_{k=1}^\infty c_k(f) z^k$ is analytic in the set $\{z \in \mathbb{C}: |z|< R \}$ and its singularities are located in the annulus $\Tilde{\mathcal{A}}^{(+)}$. By $c_k(f)$ in (\ref{ls}) we denote the Laurent coefficients of $f$ that are given via the Cauchy integral formula,
\begin{equation}\label{lc}
c_k(f)=\frac{1}{2\pi \mathrm{i}} \oint\limits_\Gamma f(z) z^{-k-1} \, \mathrm{d} z,
\end{equation}
where $\Gamma$ is a regular and positively oriented simple loop (closed contour) around $0$ and $f$ is holomorphic on $\Gamma$. We employ these coefficients $c_k(f)$ as input data for our method, which is described in Algorithm  \ref{alg3}. In this paper, we compute $c_k(f)$  by Algorithm \ref{alg2} using values of $f$ at equidistantly distributed points on $\Gamma$.

\section{Neural network for detection of pole-type singularities}
\label{spt}

In this section, we present a construction of our new neural network $\Phi: \cc\rightarrow \cc$ that approximates $f \in \mathcal{M}$ and detects its singularities in the annulus $\Tilde{\mathcal{A}}$. To achieve this property, we construct our neural network in the form
\begin{equation}\label{nn}
\Phi(z)=\Phi^{(+)}(z)+\Phi^{(-)}(z),
\end{equation}
where $\Phi^{(+)}$ is the neural network component that approximates $f^{(+)}$ in the set $\{z \in \mathbb{C}: |z|< R \}$  and detects singularities in the annulus $\Tilde{\mathcal{A}}^{(+)}=\{z \in \mathbb{C}: R\leq |z|< \Tilde{R} \}$, and $\Phi^{(-)}$ is the neural network component that approximates $f^{(-)}$ in the set $\{z \in \mathbb{C}: |z|> r \}$ and detects singularities in the annulus  $\Tilde{\mathcal{A}}^{(-)}=\{z \in \mathbb{C}: \Tilde{r}< |z|\leq r \}$ (see~\Cref{nns}). The two components of (\ref{nn}), $\Phi^{(+)}$ and $\Phi^{(-)}$, are given by 
\begin{align}
   \Phi^{(+)}(z): & =  \mathbf{W}_2^{(+)} r^{(+)} \left( \mathbf{W}_1^{(+)} z - \boldsymbol{b}_1^{(+)}  \right) - b_2^{(+)}, \label{f1} \\
      \Phi^{(-)}(z): & =  \mathbf{W}_2^{(-)} r^{(-)} \left( \mathbf{W}_1^{(-)} z^{-1} - \boldsymbol{b}_1^{(-)}  \right) - b_2^{(-)}, \label{f2}
\end{align}
respectively, where $r^{(\pm)}$ are the activation functions, $M^{(\pm)}$ are the numbers of neurons in the hidden layers of $\Phi^{(\pm)}$,  $\mathbf{W}_1^{(\pm)} \in \cc^{M^{(\pm)} \times 1}$, $ \boldsymbol{b}_1^{(\pm)} \in \cc^{M^{(\pm)}}$ are weights and biases, respectively, of the hidden layers, and $\mathbf{W}_2^{(\pm)}  \in \cc^{1 \times M^{(\pm)} }$, $b_2^{(\pm)} \in \cc$ are weights and biases, respectively, of the output layers. Our further goal is to present methods for the computation of all parameters that determine the  components $\Phi^{(\pm)}$.


\subsection{Adaptive activation functions}
\label{ss1}

In this subsection, we explain the selection of the activation functions $r^{(\pm)}$ for the components $\Phi^{(\pm)}$ of our neural network (\ref{nn}). Let us assume that  $N^{(\pm)}, M^{(\pm)} \in \nn$ are given. In Algorithm \ref{alg1}, we also present a numerical method for computation of these parameters $N^{(\pm)}$ and $M^{(\pm)}$. Note that $M^{(\pm)} \in \nn$ correspond to the numbers of singularities of $f$ in the  annuli $\Tilde{\mathcal{A}}^{(\pm)}$ and  $N^{(\pm)}$ are responsible for the convergence of the neural network $\Phi$. By $M^{(\pm)}$ we also determine the numbers of neurons in the hidden layers of the components $\Phi^{(\pm)}$.

We propose  an adaptive construction of the activation functions $r^{(\pm)}$ as rational functions of certain types $(N^{(\pm)}+1-M^{(\pm)},1)$, i.e.
\begin{equation}\label{r1}
r^{(\pm)}(z):=\frac{\alpha^{(\pm)}_{N^{(\pm)}+1-M^{(\pm)}} z^{N^{(\pm)}+1-M^{(\pm)}} +\dots+\alpha_1^{(\pm)} z+\alpha_0^{(\pm)}}{\gamma_1^{(\pm)} z +\gamma_0^{(\pm)}},
\end{equation}
where $\alpha_j^{(\pm)} \in \cc$, $j=0,\ldots, N^{(\pm)}+1-M^{(\pm)}$ and $\gamma_1^{(\pm)}, \, \gamma_0^{(\pm)} \in \cc$.
Based on numerical experiments, we observed that we obtain good results by computing $r^{(\pm)}(z)$ as Laurent-Pad\'{e} approximations for functions of the form $\omega^{(\pm)}(z)=\frac{\varphi^{(\pm)}(z)}{z-z_0^{(\pm)}}$, where $\varphi^{(\pm)}(z)$ are analytic in $\cc$, $z_0^{(\pm)} \in \cc$ and  $|z_0^{(\pm)}|>1$, i.e. $\omega^{(\pm)}(z)$ are analytic in the unit circle and meromorphic in the sets $\{z \in \mathbb{C}: |z|< \varrho^{(\pm)} \}$ for some $\varrho^{(\pm)}>1$  functions with one pole each.  From the Montessus de Ballore theorem regarding convergence of a Laurent-Pad\'{e} approximation (see, for example, \cite{B87}), we have that if $(N^{(\pm)}+1-M^{(\pm)}) \rightarrow \infty$, then $r^{(\pm)}(z) \rightarrow \frac{\varphi^{(\pm)}(z)}{z-z_0^{(\pm)}}$  uniformly on any compact subset of $\{z \in \mathbb{C}: |z|< \varrho^{(\pm)} \} \setminus \{z_0^{(\pm)}\}$ and $-\gamma_0^{(\pm)}/\gamma_1^{(\pm)} \rightarrow z_0^{(\pm)}$. The types of $r^{(\pm)}$ sufficiently depend on the number of singularities of $f$ and have an impact on the convergence of the neural network. Surprisingly, the locations of the   poles $z_0^{(\pm)}$ do not play an important role for the approximation results and are only used to introduce a rational structure of the neural network. 

\subsection{Computation of weights and biases of the hidden layers}
\label{ss3}

In this subsection, we present a new backpropagation-free method for the computation of the weights and biases of the hidden layers of $\Phi^{(\pm)}$. First, we  determine some additional parameters.
For given $N^{(\pm)}$ and $M^{(\pm)}$, we compute polynomials $p^{(+)}_{N^{(+)}}$ and $q^{(+)}_{M^{(+)}}$ from the condition 
\begin{equation}\label{pplus}
    q^{(+)}_{M^{(+)}}(z) f^{(+)}(z)- p^{(+)}_{N^{(+)}}(z) = \mathcal{O}(z^{N^{(+)}+M^{(+)}+1}),
\end{equation}
and polynomials $p^{(-)}_{N^{(-)}}$ and $q^{(-)}_{M^{(-)}}$ with respect to the argument $1/z$ from the condition
\begin{equation}\label{pminus}
    q^{(-)}_{M^{(-)}}(1/z) f^{(-)}(z)- p^{(-)}_{N^{(-)}}(1/z) = \mathcal{O}(z^{-(N^{(-)}+M^{(-)}+1)}).
\end{equation}
That means that coefficient vectors $\boldsymbol{q}^{(\pm)}:=(q_0^{(\pm)},\dots,q_{M^{(\pm)}}^{(\pm)})^T$ of the polynomials $q^{(+)}_{M^{(+)}}(z): = \sum_{i=0}^{M^{(+)}} q_i^{(+)} z^i$ and $q^{(-)}_{M^{(-)}}(1/z): = \sum_{i=0}^{M^{(-)}} q_i^{(-)} (1/z)^i$ can be computed from the  equations
$$
   \mathbf{C}_{N^{(\pm)},M^{(\pm)}}  \boldsymbol{q}^{(\pm)}=0,
   $$
where
$$
    \mathbf{C}_{N^{(\pm)},M^{(\pm)}}=\left( c_{\pm(N^{(\pm)}+k-\ell)}(f)  \right)_{k=1,\ell=0}^{M^{(\pm)}}   \in \cc^{M^{(\pm)}\times (M^{(\pm)}+1)}
    $$
are Toeplitz matrices.   Then the coefficient  vectors $\boldsymbol{p}^{(\pm)}:=(p_0^{(\pm)},\dots,p_{N^{(\pm)}}^{(\pm)})^T$ of polynomials $p^{(+)}_{N^{(+)}}(z):=\sum_{k=0}^{N^{(+)} } p_k^{(+)} z^k$ and $p^{(-)}_{N^{(-)}}(1/z):=\sum_{k=0}^{N^{(-)} } p_k^{(-)} (1/z)^k$ are determined by $  p_k^{(\pm)} =  \sum_{j=0}^{k} c_{\pm(k-j)}(f) q_j^{(\pm)}$ for $k=0,\dots,M^{(\pm)}$, and for $k=M^{(\pm)}+1,\dots,N^{(\pm)}$ via $  p_k^{(\pm)} =  \sum_{j=0}^{M^{(\pm)}} c_{\pm(k-j)}(f) q_j^{(\pm)}$.

As was already mentioned,  $M^{(\pm)}$ are the numbers of the neurons in the hidden layers of the components $\Phi^{(\pm)}$, and $\mathbf{W}_1^{(\pm)}:=(w_{11}^{(\pm)},\dots,w_{M^{(\pm)}1}^{(\pm)})^T \in \cc^{M^{(\pm)} \times 1}$ and  $\boldsymbol{b}_1^{(\pm)}:=(b_{1 1}^{(\pm)},\dots,b_{M^{(\pm)} 1}^{(\pm)})^T \in \cc^{M^{(\pm)}}$ are  weights and biases, respectively, of the hidden layers  of  $\Phi^{(\pm)}$. First, employing coefficients $q_i^{(\pm)}$, $i=0,\dots,M^{(\pm)}$, we determine parameters 
 $C_{k0}^{(\pm)}$ and $C_{k1}^{(\pm)}$, $k=1,\dots,M^{(\pm)}$, from the conditions
\begin{align}
    \prod\limits_{k=1}^{M^{(+)}} (C_{k0}^{(+)}+C_{k1}^{(+)} z ) & = \sum\limits_{i=0}^{M^{(+)}} q_i^{(+)} z^i,  \label{ccoef1} \\
    \prod\limits_{k=1}^{M^{(-)}} (C_{k0}^{(-)}+C_{k1}^{(-)} z^{-1} ) &= \sum\limits_{i=0}^{M^{(-)}} q_i^{(-)} z^{-i}. \label{ccoef2}
\end{align}
Then using also coefficients $\gamma_0^{(\pm)}$ and $\gamma_1^{(\pm)}$ of the denominators   of the activation functions (\ref{r1}), we determine weights and biases of $\Phi^{(\pm)}$ by
\begin{align}
    w_{k1}^{(\pm)} &=\frac{1}{\gamma_1^{(\pm)}} C^{(\pm)}_{k1}, \quad k=1,\dots,M^{(\pm)}, \label{w1}\\
    b_{k 1}^{(\pm)} & =\frac{1}{\gamma_1^{(\pm)}} (\gamma_0^{(\pm)} - C^{(\pm)}_{k0}),  \quad k=1,\dots,M^{(\pm)}.  \label{b1}
\end{align}
As will be shown in the convergence Theorem \ref{cont},  parameters $C_{k0}^{(\pm)}$ and $C_{k1}^{(\pm)}$, $k=1,\dots,M^{(\pm)}$ contain information about singularities of $f$.   Since formulas (\ref{ccoef1}) and (\ref{ccoef2}) give us $M^{(\pm)}-1$ degrees of freedom for parameters  $C_{k0}^{(\pm)}$ and $C_{k1}^{(\pm)}$, $k=1,\dots,M^{(\pm)}$, we compute $C_{k0}^{(\pm)}$, $k=1,\dots,M^{(\pm)}-1$ randomly.  Note, that from formulas (\ref{w1}) and (\ref{b1}) we can conclude that the random choice of $C_{k0}^{(\pm)}$, $k=1,\dots,M^{(\pm)}-1$ and the coefficients $\gamma_0^{(\pm)}$ and $\gamma_1^{(\pm)}$ of the activation functions have an impact on the values of weights and biases of the hidden layers of $\Phi^{(\pm)}$, but do not affect the locations of singularities of $f$, since those are controlled by parameters $q_i^{(\pm)}$.

\subsection{Computation of weights and biases of the output layers}
\label{ss4}

In this subsection, we describe a method to determine the weights $\mathbf{W}_2^{(\pm)}: = ( w_{1 2}^{(\pm)},\dots,w_{ M^{(\pm)} 2}^{(\pm)}) \in \cc^{1 \times M^{(\pm)} }$ and the biases $b_2^{(\pm)} \in \cc$ of the output layers of the neural network components $\Phi^{(\pm)}$. Employing coefficients $\alpha_j^{(\pm)}$, $j=0,\dots,N^{(\pm)}+1-M^{(\pm)}$ of the numerators of the  activation functions (\ref{r1}), and weights and biases of the hidden layers defined  by  (\ref{w1}) and (\ref{b1}), we compute parameters $d_{j k}^{(\ell,\pm)}$ by
\begin{equation}\label{alp}
d_{j k}^{(\ell,\pm)}:=\alpha_j^{(\pm)} \binom{j}{k} (- b_{\ell 1 }^{(\pm)} )^{j-k} (w_{\ell 1}^{(\pm)} )^{k}, \quad j=k,\dots,N^{(\pm)}+1-M^{(\pm)},
\end{equation}
and then $A_{k \ell}^{(\pm)}$ via
\begin{equation}\label{akl}
    A_{k \ell}^{(\pm)}:= \sum_{j=k}^{N^{(\pm)}+1-M^{(\pm)}} d_{j k}^{(\ell,\pm)}, \quad k=0,\dots,N^{(\pm)}+1-M^{(\pm)},
\end{equation}
for $\ell=1,\dots,M^{(\pm)}$. 
Finally, using coefficients $q_i^{(\pm)}$, $i=0,\dots,M^{(\pm)}$, $p_k^{(\pm)}$, $k=0,\dots,N^{(\pm)}$, parameters $C_{k0}^{(\pm)}$, $C_{k1}^{(\pm)}$, $k=1,\dots,M^{(\pm)}$ from (\ref{ccoef1})--(\ref{ccoef2}), and $ A_{k\ell}^{(\pm)}$, $k=0,\dots,N^{(\pm)}+1-M^{(\pm)}$, $\ell=1,\dots,M^{(\pm)}$ from (\ref{akl}), we determine the desired weights $w_{\ell 2}^{(+)}$, $\ell=1,\dots,M^{(+)}$ and the bias $b_2^{(+)}$ from the condition
\begin{equation}\label{w21}
\sum_{\ell=1}^{M^{(+)}} w_{\ell 2}^{(+)} F_{\ell}^{(+)}(z)  -b_2^{(+)} q^{(+)}_{M^{(+)}}(z)=p^{(+)}_{N^{(+)}}(z),
\end{equation}
and the  weights $w_{\ell 2}^{(-)}$, $\ell=1,\dots,M^{(-)}$ and the bias $b_2^{(-)}$ from the condition
\begin{equation}\label{w22}
\sum_{\ell=1}^{M^{(-)}} w_{\ell 2}^{(-)} F_{\ell}^{(-)}(1/z) -b_2^{(-)} q^{(-)}_{M^{(-)}}\left(1/z\right)=p^{(-)}_{N^{(-)}}\left(1/z\right),
\end{equation}
where  for $\ell=1,\ldots,M^{(\pm)}$,
\begin{align}
    F_{\ell}^{(+)}(z)&:= \sum_{k=0}^{N^{(+)}+1-M^{(+)}} A_{k\ell}^{(+)} z^k  \prod\limits_{\substack{i=1 \\ i\neq \ell}}^{M^{(+)}} (C_{i0}^{(+)}+C_{i1}^{(+)} z ),  \label{fp1} \\
    F_{\ell}^{(-)}(1/z)&:= \sum_{k=0}^{N^{(-)}+1-M^{(-)}} A_{k\ell}^{(-)} z^{-k}   \prod\limits_{\substack{i=1 \\ i\neq \ell}}^{M^{(-)}} (C_{i0}^{(-)}+C_{i1}^{(-)} z^{-1}  ). \label{fp2} 
\end{align}
Note that polynomials $F_{\ell}^{(\pm)}$  have degrees $N^{(\pm)}$. In Subsection \ref{sec43}, we consider a method for numerical computation of weights $\mathbf{W}_2^{(\pm)}$ and biases $b_2^{(\pm)}$.



\subsection{Convergence of the neural network}
\label{cnn}

In this subsection, we present the convergence result for the neural network $\Phi$  constructed in Subsections \ref{ss1}-\ref{ss4}.

\begin{theorem} \label{cont}
 Let  $f \in \mathcal{M}$ and $f$ has the Laurent expansion $f(z)=f^{(-)}(z)+f^{(+)}(z)$ as in (\ref{ls}) for each $z \in \mathcal{A}$.
Let further a neural network $\Phi:\cc\mapsto \cc$  be defined by $\Phi(z)=\Phi^{(+)}(z)+\Phi^{(-)}(z)$ as in (\ref{nn}). Components $\Phi^{(\pm)}$ are determined by rational activation functions of types $(N^{(\pm)}+1-M^{(\pm)},1)$  given by (\ref{r1}), by weights and biases of the hidden layers as in (\ref{w1}) and (\ref{b1}), respectively, and by  weights and biases of the output layers as in (\ref{w21}) and (\ref{w22}).
Then $\Phi$ satisfies the following properties:
\begin{itemize}
  \item[(1)] A neural network component $\Phi^{(+)}$ is a rational function of type $(N^{(+)}, M^{(+)})$, i.e. $\Phi^{(+)}(z)=\frac{p^{(+)}_{N^{(+)}}(z)}{q^{(+)}_{M^{(+)}}(z)}$.  Moreover, 
  $\lim\limits_{N^{(+)}\rightarrow \infty} \Phi^{(+)}(z) = f^{(+)}(z)$   uniformly on any compact subset of the set  $\left \{ z \in \mathbb{C}:  |z|< \Tilde{R}   \right \} \setminus \left \{ s_\ell^{(+)} \right \}_{\ell=1}^{\kappa^{(+)}}$ and  $q^{(+)}_{M^{(+)}} (z)\rightarrow   \prod_{\ell=1}^{\kappa^{(+)}} \left( 1- z/s_\ell^{(+)} \right)^{m_\ell^{(+)}}$ as $N^{(+)}\rightarrow \infty$.
    \item[(2)] A neural network component $\Phi^{(-)}$ is a rational function of type $(N^{(-)}, M^{(-)})$ of the  argument $1/z$, i.e. it can be represented as $\Phi^{(-)}(z)=\frac{p^{(-)}_{N^{(-)}}(z^{-1})}{q^{(-)}_{M^{(-)}}(z^{-1})}$. Furthermore,
  $\lim\limits_{N^{(-)}\rightarrow \infty} \Phi^{(-)}(z) = f^{(-)}(z)$  uniformly on any compact subset of the set  $\left \{ z \in \mathbb{C}:  |z|> \Tilde{r}   \right \} \setminus \left \{ s_\ell^{(-)} \right \}_{\ell=1}^{\kappa^{(-)}}$ and  $q^{(-)}_{M^{(-)}} (z^{-1}) \rightarrow   \prod_{\ell=1}^{\kappa^{(-)}} \left( 1- s_\ell^{(-)}/z \right)^{m_\ell^{(-)}}$ as $N^{(-)}\rightarrow \infty$.
\end{itemize}

\end{theorem}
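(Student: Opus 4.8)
The plan is to prove the rational‑function representations claimed in items~(1) and~(2) by a direct algebraic reduction, and then to deduce the convergence assertions from the Montessus de Ballore theorem for classical $(N^{(\pm)},M^{(\pm)})$ Padé approximants. For part~(1) I would first unwind one neuron: substituting the hidden‑layer parameters (\ref{w1})--(\ref{b1}) into the activation (\ref{r1}), the denominator of the $\ell$‑th component of $r^{(+)}(\mathbf{W}_1^{(+)}z-\boldsymbol{b}_1^{(+)})$ collapses to exactly $C_{\ell 0}^{(+)}+C_{\ell 1}^{(+)}z$, while expanding its numerator $\sum_j\alpha_j^{(+)}(w_{\ell 1}^{(+)}z-b_{\ell 1}^{(+)})^{j}$ by the binomial theorem and collecting powers of $z$ reproduces precisely the coefficients $A_{k\ell}^{(+)}$ of (\ref{alp})--(\ref{akl}); hence the $\ell$‑th neuron equals $\big(\sum_{k=0}^{N^{(+)}+1-M^{(+)}}A_{k\ell}^{(+)}z^{k}\big)\big/(C_{\ell 0}^{(+)}+C_{\ell 1}^{(+)}z)$. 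Forming the output combination $\Phi^{(+)}(z)=\sum_\ell w_{\ell 2}^{(+)}[r^{(+)}(\cdots)]_\ell-b_2^{(+)}$, the least common denominator is $\prod_\ell(C_{\ell 0}^{(+)}+C_{\ell 1}^{(+)}z)=q^{(+)}_{M^{(+)}}(z)$ by (\ref{ccoef1}), its numerator is $\sum_\ell w_{\ell 2}^{(+)}F_\ell^{(+)}(z)-b_2^{(+)}q^{(+)}_{M^{(+)}}(z)$ with $F_\ell^{(+)}$ as in (\ref{fp1}), and by the defining condition (\ref{w21}) this numerator equals $p^{(+)}_{N^{(+)}}(z)$; since $\deg p^{(+)}_{N^{(+)}}=N^{(+)}$ and $\deg q^{(+)}_{M^{(+)}}=M^{(+)}$, it follows that $\Phi^{(+)}=p^{(+)}_{N^{(+)}}/q^{(+)}_{M^{(+)}}$ is a rational function of type $(N^{(+)},M^{(+)})$. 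A delicate point is that the linear system (\ref{w21}) for $(w_{\ell 2}^{(+)},b_2^{(+)})$ must be consistent; I would verify this by partial fractions, choosing $w_{\ell 2}^{(+)}$ so as to cancel the residues of $p^{(+)}_{N^{(+)}}/q^{(+)}_{M^{(+)}}$ at the $M^{(+)}$ zeros of $q^{(+)}_{M^{(+)}}$ and then checking that the remaining polynomial part is forced to be a constant (absorbed into $b_2^{(+)}$), the point being that the adaptively chosen numerator degree $N^{(+)}+1-M^{(+)}$ of the activation is exactly what closes this count.

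For the convergence in~(1), the observation is that by (\ref{pplus}) the pair $(p^{(+)}_{N^{(+)}},q^{(+)}_{M^{(+)}})$ solves the linearized $(N^{(+)},M^{(+)})$ Padé problem for $f^{(+)}$ at the origin, with the Toeplitz relations for $\boldsymbol{q}^{(+)}$ and the triangular formulas for $\boldsymbol{p}^{(+)}$ being its normal equations. Since $f^{(+)}$ is analytic on $\{z\in\cc:|z|<R\}$ — in particular holomorphic at $0$ — and its only singularities in $\{z\in\cc:|z|<\tilde R\}$ are the poles $s_1^{(+)},\dots,s_{\kappa^{(+)}}^{(+)}$ of total multiplicity $M^{(+)}$, the Montessus de Ballore theorem applies (the same mechanism invoked for the activation itself in Subsection~\ref{ss1}): for $N^{(+)}$ large enough the $(N^{(+)},M^{(+)})$ Padé approximant exists, is unique, has denominator of exact degree $M^{(+)}$, converges to $f^{(+)}$ uniformly on compact subsets of $\{z\in\cc:|z|<\tilde R\}\setminus\{s_\ell^{(+)}\}_{\ell=1}^{\kappa^{(+)}}$ as $N^{(+)}\to\infty$, and — after the harmless normalization $q^{(+)}_{M^{(+)}}(0)=1$, which does not change $\Phi^{(+)}$ because $p^{(+)}_{N^{(+)}}$ carries the same scalar — satisfies $q^{(+)}_{M^{(+)}}(z)\to\prod_{\ell=1}^{\kappa^{(+)}}(1-z/s_\ell^{(+)})^{m_\ell^{(+)}}$. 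Combining this with the structural identity gives all of~(1).

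Part~(2) is the image of part~(1) under $w=1/z$. Setting $\tilde f(w):=f^{(-)}(1/w)$ for $w\neq 0$ and $\tilde f(0):=c_0(f)/2$, the function $\tilde f$ is holomorphic at $0$, analytic in $\{w:|w|<1/r\}$, and its singularities in $\{w:|w|<1/\tilde r\}$ are exactly the poles $1/s_\ell^{(-)}$ with multiplicities $m_\ell^{(-)}$ and total multiplicity $M^{(-)}$. The same neuron‑by‑neuron computation — now with the argument $z^{-1}$ in (\ref{f2}), the factorization (\ref{ccoef2}) and the condition (\ref{w22}) — gives $\Phi^{(-)}(z)=p^{(-)}_{N^{(-)}}(1/z)/q^{(-)}_{M^{(-)}}(1/z)$, which by (\ref{pminus}) is the $(N^{(-)},M^{(-)})$ Padé approximant of $\tilde f$ in the variable $w$. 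Applying Montessus de Ballore to $\tilde f$ and mapping compact subsets of $\{w:|w|<1/\tilde r\}\setminus\{1/s_\ell^{(-)}\}$ back to compact subsets of $\{z:|z|>\tilde r\}\setminus\{s_\ell^{(-)}\}$ yields $\Phi^{(-)}\to f^{(-)}$ locally uniformly there and $q^{(-)}_{M^{(-)}}(1/z)\to\prod_{\ell=1}^{\kappa^{(-)}}(1-s_\ell^{(-)}/z)^{m_\ell^{(-)}}$.

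The main obstacle is not the Montessus step, which is classical, but the structural step: carrying out the binomial bookkeeping so that it matches (\ref{alp})--(\ref{fp1}) exactly, and — more substantively — establishing that the output‑layer system (\ref{w21})/(\ref{w22}) is genuinely consistent, so that $\Phi^{(\pm)}$ equals the Padé approximant exactly and not merely in a least‑squares sense. One should also confirm that the Toeplitz systems (\ref{pplus})/(\ref{pminus}) are non‑degenerate for all large $N^{(\pm)}$, so that the constructed pair really is the Padé approximant, and fix the normalization of $q^{(\pm)}_{M^{(\pm)}}$ so that its limit is literally the displayed product.
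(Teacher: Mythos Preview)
Your proposal is correct and follows essentially the same route as the paper: reduce each neuron via (\ref{w1})--(\ref{b1}) and the binomial expansion to $\big(\sum_k A_{k\ell}^{(\pm)}z^{\pm k}\big)/(C_{\ell 0}^{(\pm)}+C_{\ell 1}^{(\pm)}z^{\pm 1})$, combine over $\ell$ using (\ref{ccoef1})--(\ref{ccoef2}) and (\ref{w21})--(\ref{w22}) to obtain $\Phi^{(\pm)}=p^{(\pm)}_{N^{(\pm)}}/q^{(\pm)}_{M^{(\pm)}}$, and then invoke Montessus de Ballore. The paper's proof does exactly this and nothing more; in particular it does \emph{not} verify consistency of (\ref{w21})/(\ref{w22}), non-degeneracy of the Toeplitz systems, or the normalization of $q^{(\pm)}_{M^{(\pm)}}$ --- it treats (\ref{w21})--(\ref{w22}) as defining conditions and cites Montessus de Ballore directly --- so the extra care you flag in your last paragraph goes beyond what the paper supplies.
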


\begin{proof}
First, we consider the neural network component $\Phi^{(+)}$ given by (\ref{f1}).  
Taking into account the  definition of the activation function  (\ref{r1}), weights and biases of the hidden layer (\ref{w1})--(\ref{b1}), as well as additional parameters (\ref{alp})--(\ref{akl}), we get for each $\ell=1,\dots,M^{(+)}$
\begin{align}
     r^{(+)}  & (w_{\ell 1}^{(+)}z-b_{\ell  1}^{(+)}) =\frac{\sum_{j=0}^{N^{(+)}+1-M^{(+)}} \alpha_j^{(+)} (w_{\ell 1}^{(+)}z-b_{\ell  1}^{(+)})^j}{ \gamma_0^{(+)}+ \gamma_1^{(+)}(w_{\ell 1}^{(+)}z-b_{\ell  1}^{(+)})} \notag \\
     & = \frac{\sum_{j=0}^{N^{(+)}+1-M^{(+)}} \sum_{k=0}^j d_{j k}^{(\ell, +)} z^k}{\gamma_0^{(+)}+ \gamma_1^{(+)} (\frac{1}{\gamma_1^{(+)}} C^{(+)}_{\ell 1} z - \frac{1}{\gamma_1^{(+)}} (\gamma_0^{(+)} - C^{(+)}_{\ell 0}))}= \frac{\sum_{k=0}^{N^{(+)}+1-M^{(+)}} A_{k \ell}^{(+)} z^k}{C_{\ell 0}^{(+)} + C_{\ell 1}^{(+)} z}.  \label{r11}
\end{align}
Thus, we can represent the column vector of neurons in the hidden layer $r^{(+)}(\mathbf{W}_1^{(+)} \, z - \boldsymbol{b}_1^{(+)})= \left( r^{(+)}(w_{\ell 1}^{(+)} \, z - b_{\ell 1}^{(+)}) \right)_{\ell=1}^{M^{(+)}} $ in the following form 
\begin{equation} \label{vnh}
    r^{(+)}(\mathbf{W}_1^{(+)} \, z - \boldsymbol{b}_1^{(+)})=  \left( \frac{\sum_{k=0}^{N^{(+)}+1-M^{(+)}}  A_{k \ell}^{(+)} z^k}{C_{\ell 0}^{(+)} + C_{\ell 1}^{(+)} z}  \right)_{\ell=1}^{M^{(+)}}.
\end{equation}
Using (\ref{vnh}), (\ref{ccoef1}) and (\ref{w21}), we rewrite  $\Phi^{(+)}$  as follows
\begin{align*}
    \Phi^{(+)}(z) 
    = & \frac{ w_{12}^{(+)} \sum_{k=0}^{N^{(+)}+1-M^{(+)}} A_{k1}^{(+)} z^k}{C_{10}^{(+)} + C_{11}^{(+)} z} + \ldots+\frac{ w_{M^{(+)} 2}^{(+)} \sum_{k=0}^{N^{(+)}+1-M^{(+)}} A_{k M^{(+)}}^{(+)} z^k}{C_{M^{(+)} 0}^{(+)} + C_{M^{(+)} 1}^{(+)} z}\\
    & -b_2^{(+)} =  \frac{\sum_{\ell=1}^{M^{(+)}} w_{\ell 2}^{(+)} F_{\ell}^{(+)}(z)  -b_2^{(+)} q^{(+)}_{M^{(+)}}(z)}{q^{(+)}_{M^{(+)}}(z)}= \frac{p^{(+)}_{N^{(+)}}(z)}{q^{(+)}_{M^{(+)}}(z)},
\end{align*}
where polynomials $p^{(+)}_{N^{(+)}}(z)$ and  $q^{(+)}_{M^{(+)}}(z)$ are determined from the condition (\ref{pplus}), and $F_{\ell}^{(+)}(z)$ are defined by (\ref{fp1}). Thus, we conclude that $\Phi^{(+)}$ is the Lauren-Pad\'{e} approximation of type  $(N^{(+)},M^{(+)})$ to   $f^{(+)}$ in the domain $\left \{ z \in \mathbb{C}:  |z|< \Tilde{R}   \right \}$, and the statement (1)  follows from the  Montessus de Ballore theorem regarding the convergence of a Laurent-Pad\'{e} approximation \cite{B87}.

Let us now consider the neural network component $\Phi^{(-)}(z)$ as in (\ref{f2}). Similarly as in (\ref{r11}), using definition of the activation function (\ref{r1}), and formulas  (\ref{w1})--(\ref{akl}), we obtain for each $\ell=1,\dots,M^{(-)}$
\begin{align}
     r^{(-)} &(w_{\ell 1}^{(-)}z^{-1}-b_{\ell  1}^{(-)}) =\frac{\sum_{j=0}^{N^{(-)}+1-M^{(-)}} \alpha_j^{(-)} (w_{\ell 1}^{(-)}z^{-1}-b_{\ell  1}^{(-)})^j}{ \gamma_0^{(-)}+ \gamma_1^{(-)}(w_{\ell 1}^{(-)}z^{-1}-b_{\ell  1}^{(-)})} \notag \\
     & = \frac{\sum_{j=0}^{N^{(-)}+1-M^{(-)}} \sum_{k=0}^j d_{j k}^{(\ell,-)} z^{-k}}{\gamma_0^{(-)}+ \gamma_1^{(-)} (\frac{1}{\gamma_1^{(-)}} C^{(-)}_{\ell 1} z^{-1} - \frac{1}{\gamma_1^{(-)}} (\gamma_0^{(-)} - C^{(-)}_{\ell 0}))}= \frac{\sum_{k=0}^{N^{(-)}+1-M^{(-)}} A_{k \ell}^{(-)} z^{-k}}{C_{\ell 0}^{(-)} + C_{\ell 1}^{(-)} z^{-1}}, \label{r12}
\end{align}
and for the column vector of the neurons in the hidden layer $r^{(-)}(\mathbf{W}_1^{(-)} \, z^{-1} - \boldsymbol{b}_{1}^{(-)})=\left( r^{(-)}(w_{\ell 1}^{(-)} \, z^{-1} - b_{\ell 1}^{(-)}) \right)_{\ell=1}^{M^{(-)}} $,
\begin{equation} \label{vnh2}
    r^{(-)}(\mathbf{W}_1^{(-)} \, z^{-1} - \boldsymbol{b}_{1}^{(-)})=  \left( \frac{\sum_{k=0}^{N^{(-)}+1-M^{(-)}}  A_{k \ell}^{(-)} z^{-k}}{C_{\ell 0}^{(-)} + C_{\ell 1}^{(-)} z^{-1}}  \right)_{\ell=1}^{M^{(-)}} .
\end{equation}
Applying  (\ref{vnh2}), (\ref{ccoef1}) and (\ref{w22}), we obtain for  $\Phi^{(-)}$  the following representation
\begin{align}
    \Phi^{(-)} (z) 
    = & \frac{ w_{12}^{(-)} \sum_{k=0}^{N^{(-)}+1-M^{(-)}} A_{k1}^{(-)} z^{-k}}{C_{10}^{(-)} + C_{11}^{(-)} z^{-1}} + \dots+\frac{ w_{M^{(-)} 2}^{(-)} \sum_{k=0}^{N^{(-)}+1-M^{(-)}} A_{k M^{(-)}}^{(-)} z^{-k}}{C_{M^{(-)}0}^{(-)} + C_{M^{(-)}1}^{(-)} z^{-1}}\notag \\
    & -b_2^{(-)}  =  \frac{\sum_{\ell=1}^{M^{(-)}} w_{\ell 2}^{(-)} F_{\ell}^{(-)}(1/z) -b_2^{(-)} q^{(-)}_{M^{(-)}}\left(1/z\right)}{q^{(-)}_{M^{(-)}}(1/z)}= \frac{p^{(-)}_{N^{(-)}}(1/z)}{q^{(-)}_{M^{(-)}}(1/z)}, \notag
\end{align}
where the polynomials $p^{(-)}_{N^{(-)}}(z^{-1})$ and $q^{(-)}_{M^{(-)}}(z^{-1})$ are computed from (\ref{pminus}) and $F_{\ell}^{(-)}(1/z)$ are defined by (\ref{fp2}). Therefore,  $ \Phi^{(-)}$ is the Laurent-Pad\'{e} approximation of type $(N^{(-)},M^{(-)})$ to $f^{(-)}$ in the domain $\left \{ z \in \mathbb{C}:  |z|> \Tilde{r}   \right \}$ and the statement (2)  follows again from the Montessus de Ballore theorem.
\end{proof}

\begin{remark}
 From the proof of Theorem \ref{cont}, namely from the formulas (\ref{vnh}) and (\ref{vnh2}), we observe that every neuron of the neural network is carrying information only about one singularity. That explains the structure of the components $\Phi^{(\pm)}$ with $M^{(\pm)}$ neurons in the hidden layers (see Figure \ref{nns}). From formulas (\ref{r11}) and (\ref{r12}), we also conclude that singularities $s_{\ell}^{(\pm)}$ of the function $f$ are computed by
 \begin{align}
     s_{\ell}^{(+)} &= (b^{(+)}_{\ell 1}+z_0^{(+)})/w^{(+)}_{\ell 1}, \quad \ell=1,\dots,M^{(+)}, \label{sin} \\
     s_{\ell}^{(-)} &= w^{(-)}_{\ell 1}/ (b^{(-)}_{\ell 1}+z_0^{(-)}), \quad \ell=1,\dots,M^{(-)}, \label{sin1}
\end{align}
where $z_0^{(\pm)}$ are poles of the activation functions $r^{(\pm)}$. By examining (\ref{sin}) and (\ref{sin1}), we arrive at our central idea, which is to use weights and biases of the hidden layers to scale and shift, respectively, the poles of the activation functions in order to find estimated locations of singularities (see Figure   \ref{figmain}). 
 Note that if a singularity $s_{\ell}^{(\pm)}$ has the multiplicity $m_{\ell}^{(\pm)}>1$,  then $m_{\ell}^{(\pm)}$ different neurons are carrying information about $s_{\ell}^{(\pm)}$. This question is discussed in the Numerical Experiments section.

\end{remark}

\begin{remark} From Theorem \ref{cont}, we can see that the neural network (\ref{nn}) has a structure suitable for detection of pole-type singularities. It is possible to modify the main steps of the construction of the neural network $\Phi$ described in Subsections \ref{ss1}--\ref{ss4}, such that we get a neural network for the detection of singularities of other types. The key component in the construction of $\Phi$ is using the  Laurent-Pad\'{e} method for rational approximation, which is known as a useful tool for estimating the locations of the pole-type singularities. As described in \cite{FH19}, the so-called quadratic Pad\'{e}  approximation contains square root singularities and can therefore provide additional information such as estimation of branch point locations of second order.  We can modify our method such that we obtain a neural network for the detection of pole-type singularities and branch points of second order. In that case, instead of the activation functions  (\ref{r1}) we have to use
$$
   r(z)=\frac{h(z)+\sqrt{d(z)}}{\gamma_1 z +\gamma_0 },
$$
where $d(z)$ is a polynomial of some degree $K$, and $K$ is the number of branch points of second order. Thus, in this case each neuron of the neural network will be carrying information about \textit{all branch points} of second order and the idea ``one singularity,  one neuron'' that holds for poles will not be kept. 

\end{remark}

\section{Robust algorithm for the construction of the neural network $\Phi$}
\label{rann}

In this subsection, we describe a robust numerical approach for constructing the neural network $\Phi$ as in (\ref{nn}), which is summarized in Algorithm \ref{alg3}.

\subsection{Numerical computation of Laurent coefficients}
\label{nclc}
To construct the neural network $\Phi$ for the approximation of a function $f \in \mathcal{M}$ given by (\ref{ls}), we employ a finite set of its Laurent coefficients $c_k(f)$ as input data. If they are not provided, we can compute these coefficients numerically using the procedure  described below.   
We choose a contour  $\Gamma$ as
\begin{equation}\label{count}
    \Gamma:=\Gamma^\rho= \{ z \in \cc : \, |z|=\rho, \, \rho>0 \}.
\end{equation}
Then, applying the change of variables $z=\rho \mathrm{e}^{\mathrm{i} \theta}$, $1\leq \theta \leq 2 \pi$, from (\ref{lc}) we obtain
\begin{align*}
c_k(f)=\frac{\rho^{-k} }{2\pi}  \int\limits_{0}^{2 \pi} f(\rho \mathrm{e}^{\mathrm{i} \theta}) \mathrm{e}^{-\mathrm{i} k \theta}  \, \mathrm{d} \theta= \rho^{-k} \hat{f}_k,
\end{align*}
where $\hat{f}_k$ are classical Fourier coefficients of the function $f(\rho \mathrm{e}^{\mathrm{i} \theta})$ and they can be approximated through a single Discrete Fourier Transform (DFT) applied to a given vector of function values $\boldsymbol{f}^{(2n)}:=\left(f(\rho \mathrm{e}^{\frac{2\pi \mathrm{i} j}{2n} })\right)_{j=0}^{2n-1}$ for some $n \in \nn$. Thus, we get 
\begin{equation}\label{apprlc}
c_k(f)\approx c_k^{(2n)}(f):=\frac{\rho^{-k}}{2n}
\begin{cases}
\hat{f}_{2n+k}^{(2n)}, & k=-n,\dots,-1, \\
\hat{f}_{k}^{(2n)}, & k=0,1\dots,n, \\
\end{cases}
\end{equation}
where $\hat{\boldsymbol{f}}^{(2n)}:=(\hat{f}_k^{(2n)})_{k=0}^{2n-1}$ is DFT of $\boldsymbol{f}^{(2n)}$ that can be computed by FFT with the complexity $\mathcal{O}(n \log n)$. We present these ideas in Algorithm \ref{alg2}.



\begin{algorithm}[ht]\caption{Numerical computation of Laurent coefficients}
\label{alg2}
\small{
\textbf{Input:}  $n\in \nn $, $\rho>0$,
 vector of function values $\boldsymbol{f}^{(2n)}=\left(f(\rho \mathrm{e}^{\frac{2\pi \mathrm{i} j}{2n} })\right)_{j=0}^{2n-1}$; 
\begin{enumerate}
\item Compute FFT  $\hat{\boldsymbol{f}}^{(2n)}:=(\hat{f}_k^{(2n)})_{k=0}^{2n-1}$ of a vector $\boldsymbol{f}^{(2n)}$.
\item Determine coefficients $c_k(f)$, $k=-n,\dots,n$ by (\ref{apprlc}).
\end{enumerate}

\noindent
\textbf{Output:}  Laurent coefficients $c_k(f)$, $k=-n,\dots,n$. \\
\textbf{Complexity:} $\mathcal{O}(n \log n)$.
}

\end{algorithm}

\begin{remark}
\textbf{1.} According to \cite{R18}, to choose the number $n \in \nn$, we can check the approximation error $
  |c_k(f) - c_k^{(2n)}(f)|\approx \frac{\rho^{-k}}{2 n}|\hat{f}_{k+2n}^{(4n)}|$, $k=-n,\dots,n$. 
In this case, we need to employ $2n$ additional function values, i.e. the vector $\boldsymbol{f}^{(4n)}:=\left(f(\rho \mathrm{e}^{\frac{2\pi \mathrm{i} j}{4n} })\right)_{j=0}^{4n-1}$, and  to compute $\hat{\boldsymbol{f}}^{(4n)}:=(\hat{f}_k^{(4n)})_{k=0}^{4n-1}$ as FFT of $\boldsymbol{f}^{(4n)}$.\\
\textbf{2. } To ensure stability of the algorithm, the parameter  $\rho>0$ has to be chosen as close as possible to $1$ \cite{R18}. Since the contour $\Gamma$ has to be located in the domain of analyticity of the function $f \in \mathcal{M}$, i.e. in the annulus $\mathcal{A}=\{z \in \mathbb{C}: r< |z|< R \}$, we also assume that $r<\rho<R$.  \\
\textbf{3. } We consider the case when a contour $\Gamma$ is given by (\ref{count}). As long as a contour $\Gamma$ satisfies the definition of Laurent coefficients, i.e.  $\Gamma$ is a regular and positively oriented simple loop around $0$ and $f$ is holomorphic on $\Gamma$, it can be also used in our method. We refer the interested reader to \cite{CC20} regarding numerical computation of Laurent coefficients for other choices of contours. 
\end{remark}


\subsection{Computation of the numbers of neurons in the hidden layers}

In Section \ref{spt}, for the construction of the neural network $\Phi$ we used the assumption that parameters $N^{(\pm)}$ and $M^{(\pm)}$  are known. Particularly, parameters  $M^{(\pm)}$ determine the number of singularities of $f$ in the  annuli $\Tilde{\mathcal{A}}^{(\pm)}$. Now we present Algorithm \ref{alg1} for the numerical computation of the parameters $N^{(\pm)}$ and $M^{(\pm)}$, which is  based on the iterative approach from \cite{GGT13}. We have given some upper bounds $N_1^{(\pm)}$ and $M_1^{(\pm)}$ for $N^{(\pm)}$ and $M^{(\pm)}$, respectively, and  vectors of Laurent coefficients $\boldsymbol{c}^{(\pm)}(f):=(c_0(f)/2,c_{\pm 1}(f),\dots,c_{\pm(N_1^{(\pm)}+M_1^{(\pm)})}(f))^T$. 




\begin{algorithm}[ht]\caption{Computation of parameters $N^{(\pm)}$  and $M^{(\pm)}$ and coefficient vectors  $\boldsymbol{p}^{(\pm)}=(p_0^{(\pm)},\dots,p_{N^{(\pm)}}^{(\pm)})^T$ and $\boldsymbol{q}^{(\pm)}=(q_0^{(\pm)},\dots,q_{M^{(\pm)}}^{(\pm)})^T$}
\label{alg1}
\small{
\textbf{Input:}  $N^{(\pm)}_1,  M^{(\pm)}_1\in \nn $ (upper bounds for $N^{(\pm)}$  and $M^{(\pm)}$), $\mathrm{tol}=10^{-14}$;\\
\phantom{\textbf{Input:}}  vectors of Laurent coefficients $\boldsymbol{c}^{(\pm)}(f):=(c_0(f)/2,c_{\pm 1}(f),\dots,c_{\pm(N_1^{(\pm)}+M_1^{(\pm)})}(f))^T$; 
\begin{enumerate}
\item Compute $\tau^{(\pm)}=\mathrm{tol}\cdot \| \boldsymbol{c}^{(\pm)}(f)\|_2$.
\item Construct  Toeplitz matrices $\mathbf{C}_{N_1^{(\pm)},M_1^{(\pm)}}$ as in (\ref{t1}) and compute their SVDs  (\ref{svd1}). Determine  numbers $\mu^{(\pm)} \leq M_1^{(\pm)}$  of singular values of $\mathbf{C}_{N_1^{(\pm)},M_1^{(\pm)}}$ that are larger than $\tau^{(\pm)}$.
\item If $\mu^{(\pm)}< M_1^{(\pm)}$, reduce $M_1^{(\pm)}$ to $\mu^{(\pm)}$ and $N_1^{(\pm)}$ to $N_1^{(\pm)}-(M_1^{(\pm)}-\mu^{(\pm)})$ and go back to  Step 2.
\item Compute   vectors $\boldsymbol{q}^{(\pm)}=(q_0^{(\pm)},\dots,q_{M_1^{(\pm)}}^{(\pm)})^T$ from the equation (\ref{eqq}) and  vectors $\boldsymbol{p}^{(\pm)}=(p_0^{(\pm)},\dots,p_{N_1^{(\pm)}}^{(\pm)})^T$ via (\ref{coefhp1}) and (\ref{coefhp2}). 
\item If for some integer $\lambda^{(\pm)}\geq 1$,  $|q_0^{(\pm)}|,\dots,|q_{\lambda^{(\pm)}-1}^{(\pm)}|\leq \mathrm{tol}$, redefine $q_j^{(\pm)}:=q_{j+\lambda^{(\pm)}}^{(\pm)}$ for $j=0,\dots,M_1^{(\pm)}-\lambda^{(\pm)}$ and $p_j^{(\pm)}:=p_{j+\lambda^{(\pm)}}^{(\pm)}$ for $j=0,\dots,N_1^{(\pm)}-\lambda^{(\pm)}$ and reduce  $M_1^{(\pm)}$ to $M_1^{(\pm)}-\lambda^{(\pm)}$ and  $N_1^{(\pm)} $ to $N_1^{(\pm)}-\lambda^{(\pm)}$.
\item  If for some integer $\lambda^{(\pm)}\geq 1$, $|q_{M_1^{(\pm)}+1-\lambda^{(\pm)}}^{(\pm)}|,\dots,|q_{M_1^{(\pm)}}^{(\pm)}|\leq \mathrm{tol}$, set $M^{(\pm)}:=M_1^{(^{(\pm)})}-\lambda^{(\pm)}$ and determine  $\boldsymbol{q}^{(\pm)}=(q_0^{(\pm)},\dots,q_{M^{(\pm)}}^{(\pm)})^T$.
\item  If for some integer $\lambda^{(\pm)}\geq 1$, $|p_{N_1^{(^{(\pm)})}+1-\lambda^{(\pm)}}^{(\pm)}|,\dots,|p_{N_1^{(\pm)}}^{(\pm)}|\leq \mathrm{tol}$, set  $N^{(\pm)}:=N_1^{(\pm)}-\lambda^{(\pm)}$ and determine $\boldsymbol{p}^{(\pm)}=(p_0^{(\pm)},\dots,p_{N^{(\pm)}}^{(\pm)})^T$.
\end{enumerate}

\noindent
\textbf{Output:} $N^{(\pm)}, M^{(\pm)} \in \nn$, vectors  $\boldsymbol{p}^{(\pm)}=(p_0^{(\pm)},\dots,p_{N^{(\pm)}}^{(\pm)})^T$ and $\boldsymbol{q}^{(\pm)}=(q_0^{(\pm)},\dots,q_{M^{(\pm)}}^{(\pm)})^T$.  \\
\textbf{Complexity:} $\mathcal{O}((M_1^{(\pm)})^3 \ (2+\log(\delta^{(\pm)}+1)))$, where $\delta^{(\pm)}=\min\{M_1^{(\pm)}-M^{(\pm)}, N_1^{(\pm)}-N^{(\pm)} \}$.
}

\end{algorithm}

For given $\mathrm{tol}>0$, we determine $\tau^{(\pm)}=\mathrm{tol}\cdot \| \boldsymbol{c}^{(\pm)}(f)\|_2$ and construct
  Toeplitz matrices
\begin{equation}\label{t1}
    \mathbf{C}_{N_1^{(\pm)},M_1^{(\pm)}}=\left( c_{\pm(N_1^{(\pm)}+k-\ell)}(f)  \right)_{k=1,\ell=0}^{M_1^{(\pm)}}   \in \cc^{M_1^{(\pm)}\times (M_1^{(\pm)}+1)}.
\end{equation}
 First, we compute SVDs of $ \mathbf{C}_{N_1^{(\pm)},M_1^{(\pm)}}$,
\begin{equation}\label{svd1}
   \mathbf{C}_{N_1^{(\pm)},M_1^{(\pm)}}=\mathbf{U}^{(\pm)} \mathbf{\Sigma}^{(\pm)} (\mathbf{V}^{(\pm)})^{*},
\end{equation}
where $\mathbf{U}^{(\pm)}  \in \cc^{M_1^{(\pm)}\times M_1^{(\pm)}}$ and $\mathbf{V}^{(\pm)}  \in \cc^{(M_1^{(\pm)}+1)\times (M_1^{(\pm)}+1)}$ are unitary and $\mathbf{\Sigma}^{(\pm)}  \in \rr^{M_1^{(\pm)}\times (M_1^{(\pm)}+1)}$ are real diagonal matrices with diagonal entries $\sigma_1^{(\pm)}\geq \sigma_2^{(\pm)} \geq \dots \geq \sigma_{M_1^{(\pm)}}^{(\pm)}\geq 0$, which are singular values of $\mathbf{C}_{N_1^{(\pm)},M_1^{(\pm)}}$. Let $\mu^{(\pm)}\leq M_1^{(\pm)}$ be the numbers of singular values of $\mathbf{C}_{N_1^{(\pm)},M_1^{(\pm)}}$ that are larger than $\tau^{(\pm)}$. If $\mu^{(\pm)}< M_1^{(\pm)}$, we reduce $M_1^{(\pm)}$ to $\mu^{(\pm)}$ and $N_1^{(\pm)}$ to $N_1^{(\pm)}-(M_1^{(\pm)}-\mu^{(\pm)})$ and go back to the same step of computing SVDs of the matrices (\ref{svd1}). In \cite{GGT13}, it was shown that this step is performed no more than $2+\log(\delta^{(\pm)}+1)$ times, where $\delta^{(\pm)}=\min\{M_1^{(\pm)}-M^{(\pm)}, N_1^{(\pm)}-N^{(\pm)} \}$. Computation of SVDs (\ref{svd1}) determines the overall computational cost of Algorithm \ref{alg1} which is $\mathcal{O}((M_1^{(\pm)})^3 \ (2+\log(\delta^{(\pm)}+1)))$ flops.

Further, we  compute  vectors $\boldsymbol{q}^{(\pm)}=(q_0^{(\pm)},\dots,q_{M_1^{(\pm)}}^{(\pm)})^T$ from the equations
\begin{equation}\label{eqq}
   \mathbf{C}_{N_1^{(\pm)},M_1^{(\pm)}}  \boldsymbol{q}^{(\pm)}=0,
\end{equation}
i.e. $\boldsymbol{q}^{(\pm)}$ are the null right singular vectors of the matrices $\mathbf{C}_{N_1^{(\pm)},M_1^{(\pm)}}$. Then vectors $\boldsymbol{p}^{(\pm)}=(p_0^{(\pm)},\dots,p_{N_1^{(\pm)}}^{(\pm)})^T$ are determined by 
\begin{align}
    p_k^{(\pm)} &=  \sum\limits_{j=0}^{k} c_{\pm(k-j)}(f) q_j^{(\pm)}, \quad  k=0,\dots,M_1^{(\pm)}, \label{coefhp1} \\
    p_k^{(\pm)} &=  \sum\limits_{j=0}^{M_1^{(\pm)}} c_{\pm(k-j)}(f) q_j^{(\pm)}, \quad  k=M_1^{(\pm)}+1,\dots,N_1^{(\pm)}. \label{coefhp2}
\end{align}
Next, we check if $|q_0^{(\pm)}|,\dots,|q_{\lambda^{(\pm)}-1}^{(\pm)}|\leq \mathrm{tol}$ for some integer $\lambda^{(\pm)}\geq 1$ and zero the first $\lambda^{(\pm)}$ coefficients of $\boldsymbol{q}^{(\pm)}$  and $\boldsymbol{p}^{(\pm)}$. Then we  redefine $q_j^{(\pm)}:=q_{j+\lambda^{(\pm)}}^{(\pm)}$ for $j=0,\dots,M_1^{(\pm)}-\lambda^{(\pm)}$ and $p_j^{(\pm)}:=p_{j+\lambda^{(\pm)}}^{(\pm)}$ for $j=0,\dots,M_1^{(\pm)}-\lambda^{(\pm)}$ and reduce  $M_1^{(\pm)}$ to $M_1^{(\pm)}-\lambda^{(\pm)}<M_1^{(\pm)}$ and $N_1^{(\pm)}$ to $N_1^{(\pm)}-\lambda^{(\pm)}< N_1^{(\pm)}$.

Finally,  if $|q_{M_1^{(\pm)}+1-\lambda^{(\pm)}}^{(\pm)}|,\dots,|q_{M_1^{(\pm)}}^{(\pm)}|\leq \mathrm{tol}$ for some integer $\lambda^{(\pm)}\geq 1$, we remove the last $\lambda^{(\pm)}$ coefficients of $\boldsymbol{q}^{(\pm)}$, reduce $M_1^{(\pm)}$ to $M^{(\pm)}:=M_1^{(^{(\pm)})}-\lambda^{(\pm)}$ and get the final vectors $\boldsymbol{q}^{(\pm)}=(q_0^{(\pm)},\dots,q_{M^{(\pm)}}^{(\pm)})^T$. Similarly,  if $|p_{N_1^{(^{(\pm)})}+1-\lambda^{(\pm)}}^{(\pm)}|,\dots,|p_{N_1^{(\pm)}}^{(\pm)}|\leq \mathrm{tol}$ for some integer $\lambda^{(\pm)}\geq 1$, we remove the last $\lambda^{(\pm)}$ coefficients of $\boldsymbol{p}^{(\pm)}$, reduce $N_1^{(\pm)}$ to $N^{(\pm)}:=N_1^{(\pm)}-\lambda^{(\pm)}$ and obtain the final vectors $\boldsymbol{p}^{(\pm)}=(p_0^{(\pm)},\dots,p_{N^{(\pm)}}^{(\pm)})^T$.

\begin{algorithm}[ht]\caption{Computation of the coefficient vectors $\boldsymbol{\gamma}^{(\pm)}=(\gamma_0^{(\pm)},\gamma_1^{(\pm)})^T$ and $\boldsymbol{\alpha}^{(\pm)}=(\alpha_0^{(\pm)},\dots,\alpha_{N^{(\pm)}+1-M^{(\pm)}}^{(\pm)})^T$ of the activation functions $r^{(\pm)}$ as in (\ref{r1}) }
\label{alg5}
\small{
\textbf{Input:}  $N^{(\pm)},  M^{(\pm)}\in \nn $, $n\in \nn$;\\
\phantom{\textbf{Input:}} functions $\omega^{(\pm)}(z)=\frac{\varphi^{(\pm)}(z)}{z-z_0^{(\pm)}}$,  where $\varphi^{(\pm)}(z)$ are analytic in $\cc$, $z_0^{(\pm)} \in \cc$ and  $|z_0^{(\pm)}|>1$;
\begin{enumerate}
\item Generate values $\left(\boldsymbol{\omega^{(\pm)}}\right)^{(2n)}=\left(\omega^{(\pm)}( \mathrm{e}^{\frac{2\pi \mathrm{i} j}{2n} })\right)_{j=0}^{2n-1}$, compute  Laurent coefficients $c_k(\omega^{(\pm)})$, $k=-n,\dots,n$ by Algorithm \ref{alg2}, and set  $\boldsymbol{c}^{(\pm)} (\omega^{(\pm)}):=(c_0(\omega^{(\pm)})/2,c_{\pm 1}(\omega^{(\pm)}),\dots,c_{\pm(N^{(\pm)}+2-M^{(\pm)})}(\omega^{(\pm)}))^T$.
\item Create Toeplitz matrices $ \mathbf{C}_{N^{(\pm)}+1-M^{(\pm)},1}$ as in (\ref{t1}) with Laurent coefficients of $\omega^{(\pm)}$ and compute vectors $\boldsymbol{\gamma}^{(\pm)}=(\gamma_0^{(\pm)},\gamma_1^{(\pm)})^T$ from the conditions $\mathbf{C}_{N^{(\pm)}+1-M^{(\pm)},1}  \boldsymbol{\gamma}^{(\pm)}=0$.
\item Compute vectors $\boldsymbol{\alpha}^{(\pm)}=(\alpha_0^{(\pm)},\dots,\alpha_{N^{(\pm)}+1-M^{(\pm)}}^{(\pm)})^T$ by
$
\alpha_k^{(\pm)}=  \sum_{j=0}^{k} c_{\pm(k-j)}(\omega^{(\pm)}) \gamma_j^{(\pm)}, \quad  k=0,1,
$
and
$
\alpha_k^{(\pm)}=  \sum_{j=0}^{1} c_{\pm(k-j)}(\omega^{(\pm)}) \gamma_j^{(\pm)}, \quad  k=2,\dots,N^{(\pm)}+1-M^{(\pm)}.
$
\end{enumerate}

\noindent
\textbf{Output:} Coefficient vectors  $\boldsymbol{\gamma}^{(\pm)}=(\gamma_0^{(\pm)},\gamma_1^{(\pm)})^T$ and $\boldsymbol{\alpha}^{(\pm)}=(\alpha_0^{(\pm)},\dots,\alpha_{N^{(\pm)}+1-M^{(\pm)}}^{(\pm)})^T$ . \\
\textbf{Complexity:} $\mathcal{O}(n \log n)$.}
\end{algorithm}

\begin{remark}
\textbf{1.}  We consider functions $f \in \mathcal{M}$ with the conditions $M^{(+)}>0$ and/or $M^{(-)}>0$ for total multiplicities $M^{(\pm)}$ of pole-type singularities of $f$ in the annuli $\Tilde{\mathcal{A}}^{(\pm)}$. These parameters $M^{(\pm)}$  are computed numerically by Algorithm \ref{alg1}. If one of them is equal to zero, i.e. $M^{(+)}=0$ or $M^{(-)}=0$, then the neural network $\Phi$ contains only one component, $\Phi^{(-)}$ or $\Phi^{(+)}$, respectively. \\
\textbf{2.} As recommended in \cite{GGT13}, we employ $\mathrm{tol}=10^{-14}$ for numerical experiments.
\end{remark}

\subsection{Construction of the adaptive activation functions}

In this subsection, we present Algorithm \ref{alg5} for the construction of rational activation functions (\ref{r1}). As mentioned in Subsection \ref{ss1}, the activation functions $r^{(\pm)}$ are constructed as the Laurent-Pad\'{e} approximations to $\omega^{(\pm)}(z)=\frac{\varphi^{(\pm)}(z)}{z-z_0^{(\pm)}}$,  where $\varphi^{(\pm)}(z)$ are analytic in $\cc$, $z_0^{(\pm)} \in \cc$ and  $|z_0^{(\pm)}|>1$. We use a method from \cite{GGT13} with the assumption that the type of rational approximant is known. The complexity of this method is $\mathcal{O}(n \log n)$, dominated by the complexity of the numerical computation of Laurent coefficients of $\omega^{(\pm)}$ via Algorithm \ref{alg2}.


\begin{algorithm}[ht]\caption{Computation of weights and biases of $\Phi^{(\pm)}$ }
\label{alg4}
\small{
\textbf{Input:} $N^{(\pm)}, M^{(\pm)} \in \nn$, vectors  $\boldsymbol{p}^{(\pm)}=(p_0^{(\pm)},\dots,p_{N^{(\pm)}}^{(\pm)})^T$ and $\boldsymbol{q}^{(\pm)}=(q_0^{(\pm)},\dots,q_{M^{(\pm)}}^{(\pm)})^T$;\\
\phantom{\textbf{Input:}}  vectors $\boldsymbol{\gamma}^{(\pm)}=(\gamma_0^{(\pm)},\gamma_1^{(\pm)})^T$ and $\boldsymbol{\alpha}^{(\pm)}=(\alpha_0^{(\pm)},\dots,\alpha_{N^{(\pm)}+1-M^{(\pm)}}^{(\pm)})^T$; \\ 
\phantom{\textbf{Input:}} $a,b,c,d \in \rr$, $a<b$, $c<d$, $n \in \nn$;
\begin{enumerate}
\item Compute  $C_{k 0}^{(\pm)}$, $k=1,\dots,M^{(\pm)}-1$, as random numbers from  $[a,b]+\mathrm{i} [c,d]$, and $C_{M^{(\pm)}0}^{(\pm)}$ by (\ref{cm}). Determine $C_{k1}^{(\pm)}$, $k=1,\ldots, M^{(\pm)}$  from  (\ref{ccoef1})--(\ref{ccoef2}).
\item Compute
weights $\mathbf{W}_1^{(\pm)}=(w_{11}^{(\pm)},\dots,w_{M^{(\pm)}1}^{(\pm)})^T $ and biases $\boldsymbol{b}_1^{(\pm)}=(b_{1 1}^{(\pm)},\dots,b_{M^{(\pm)} 1}^{(\pm)})^T $ of the hidden layers of  $\Phi^{(\pm)}$ via (\ref{w1}) and (\ref{b1}).
\item Compute parameters $d_{j k}^{(\ell,\pm)}$, $j=k,\dots,N^{(\pm)}+1-M^{(\pm)}$ via (\ref{alp}) and $A_{k \ell}^{(\pm)}$  $k=0,\dots,N^{(\pm)}+1-M^{(\pm)}$, $\ell=1,\dots,M^{(\pm)}$ via (\ref{akl}). 
\item Construct matrices $\mathbf{F}^{(\pm)}$ and vectors $\boldsymbol{Q}^{(\pm)}$ and $\boldsymbol{P}^{(\pm)}$ as in (\ref{fqp}) via their entries (\ref{fm}), and compute   weights $\mathbf{W}_2^{(\pm)} = ( w_{1 2}^{(\pm)},\dots,w_{ M^{(\pm)} 2}^{(\pm)}) $ and  biases $b_2^{(\pm)} $ of the output layers of $\Phi^{(\pm)}$ as the least squares solutions of the linear systems  (\ref{lsls}).
\end{enumerate}

\noindent
\textbf{Output:} Weights  $\mathbf{W}_1^{(\pm)}=(w_{11}^{(\pm)},\dots,w_{M^{(\pm)}1}^{(\pm)})^T \in \cc^{M^{(\pm)} \times 1}$ and biases $\boldsymbol{b}_1^{(\pm)}=(b_{1 1}^{(\pm)},\dots,b_{M^{(\pm)} 1}^{(\pm)})^T \in \cc^{M^{(\pm)} }$ of the hidden layers and weights $\mathbf{W}_2^{(\pm)} = ( w_{1 2}^{(\pm)},\dots,w_{ M^{(\pm)} 2}^{(\pm)}) \in \cc^{1 \times M^{(\pm)} }$ and biases $b_2^{(\pm)} \in \cc$ of the output layers of the neural network components $\Phi^{(\pm)} $. \\
\textbf{Complexity:} $\mathcal{O}(n(M^{(\pm)})^2)$.}
\end{algorithm}

\subsection{Computation  of weights and biases of the neural network components}
\label{sec43}
From formulas (\ref{w1}) and (\ref{b1}), we see that 
 weights and biases of the hidden layers depend on the coefficients $\gamma_0^{(\pm)}$ and  $\gamma_1^{(\pm)}$ of the activation functions,  and on the  parameters  $C_{k0}^{(\pm)}$ and $C_{k1}^{(\pm)}$, $k=1,\dots,M^{(\pm)}$ as in (\ref{ccoef1}) and (\ref{ccoef2}).  As was already discussed in Subsection \ref{ss3}, to compute $C_{k0}^{(\pm)}$ and $C_{k1}^{(\pm)}$, $k=1,\dots,M^{(\pm)}$ we can use $M^{(\pm)}-1$ degrees of freedom. Thus, we apply the following idea:  $C_{k0}^{(\pm)}$, $k=1,\dots,M^{(\pm)}-1$ are  computed as random numbers from  $[a,b]+\mathrm{i} [c,d]$ for some $a,b,c,d \in \rr$ ($a<b$, $c<d$) and $C_{M^{(\pm)}0}^{(\pm)}$ are determined via
\begin{equation}\label{cm}
C_{M^{(\pm)}0}^{(\pm)}=\frac{q_0^{(\pm)}}{C_{1 0}^{(\pm)}\cdot \ldots \cdot C^{(\pm)}_{(M^{(\pm)}-1) 0} }.
\end{equation}
Then $C_{k1}^{(\pm)}$, $k=1,\ldots, M^{(\pm)}$ are computed from the conditions (\ref{ccoef1}) and (\ref{ccoef2}).


Finally, we consider a method for numerical computations of weights $\mathbf{W}_2^{(\pm)}$ and biases $ b_2^{(\pm)} $ of the output layers of $\Phi^{(\pm)}$.  Let us have $2n$ points $z_m=\mathrm{e}^{\frac{2\pi m}{2n} \mathrm{i}}$,  $m=0,\dots,2n-1$. According to (\ref{w21}) and (\ref{w22}),  vectors $(\mathbf{W}_2^{(\pm)}  \,  | \, b_2^{(\pm)})^{T}  \in \cc^{ (M^{(\pm)}+1) \times 1 }$ can be computed as least squares solutions of the linear systems
\begin{equation}\label{lsls}
(\mathbf{F}^{(\pm)} \,  | \, \boldsymbol{Q}^{(\pm)} )  \,  (\mathbf{W}_2^{(\pm)}  \,  | \, b_2^{(\pm)})^{T} = \boldsymbol{P}^{(\pm)} ,
\end{equation}
where the corresponding matrices $\mathbf{F}^{(\pm)}\in \cc^{ 2n \times M^{(\pm)}  }$ and vectors $\boldsymbol{Q}^{(\pm)}, \boldsymbol{P}^{(\pm)} \in \cc^{ 2n }$,
\begin{equation}\label{fqp}
\mathbf{F}^{(\pm)} = \left(F^{(\pm)}_{m \ell} \right)_{m=0,\ell=1}^{2n-1,M^{(\pm)}},  \quad \boldsymbol{Q}^{(\pm)}  = ( Q_m^{(\pm)} )_{m=0}^{2n-1} ,  \quad \boldsymbol{P}^{(\pm)}  = ( P_m^{(\pm)} )_{m=0}^{2n-1} 
\end{equation}
are defined by their entries
\begin{equation}\label{fm}
F^{(\pm)}_{m \ell} = F^{(\pm)}_{\ell} (z_m), \quad  Q_m^{(\pm)} =-q^{(\pm)}_{M^{(\pm)}}(z_m), \quad  P_m^{(\pm)} = p^{(\pm)}_{N^{(\pm)}} (z_m).
\end{equation}
We present the main ideas of the computation of weights and biases of the components $\Phi^{(\pm)}$ in Algorithm \ref{alg4}. The numerical cost of this algorithm is determined by the complexity of the least squares solutions of the systems of linear equations (\ref{lsls}) which is  $\mathcal{O}(n(M^{(\pm)})^2)$. Note also that we use the same number $n \in \nn$ for Algorithms \ref{alg2}, \ref{alg5} and \ref{alg4}.

\begin{algorithm}[ht]\caption{Construction of the neural network components $\Phi^{(\pm)}$ }
\label{alg3}
\small{
\textbf{Input:}  $N^{(\pm)}_1,  M^{(\pm)}_1\in \nn $, $\rho>0$, $n\in \nn$, $a,b, c,d \in \rr$, $a<b$, $c<d$,  $\mathrm{tol}=10^{-14}$; \\ 
\phantom{\textbf{Input:}} functions $\omega^{(\pm)}(z)=\frac{\varphi^{(\pm)}(z)}{z-z_0^{(\pm)}}$,  where $\varphi^{(\pm)}(z)$ are analytic in $\cc$, $z_0^{(\pm)} \in \cc$ and  $|z_0^{(\pm)}|>1$; \\
\phantom{\textbf{Input:}} vector of function values $\boldsymbol{f}^{(2n)}=\left(f(\rho \mathrm{e}^{\frac{2\pi \mathrm{i} j}{2n} })\right)_{j=0}^{2n-1}$; 
\begin{enumerate}
\item Compute  Laurent coefficients $c_k(f)$, $k=-n,\dots,n$ by Algorithm \ref{alg2} and set  $\boldsymbol{c}^{(\pm)}(f):=(c_0(f)/2,c_{\pm 1}(f),\dots,c_{\pm(N_1^{(\pm)}+M_1^{(\pm)})}(f))^T$.
\item Determine parameters $N^{(\pm)}$  and $M^{(\pm)}$, and  vectors  $\boldsymbol{p}^{(\pm)}=(p_0^{(\pm)},\dots,p_{N^{(\pm)}}^{(\pm)})^T$ and $\boldsymbol{q}^{(\pm)}=(q_0^{(\pm)},\dots,q_{M^{(\pm)}}^{(\pm)})^T$ by Algorithm \ref{alg1}.
\item Compute coefficient vectors  $\boldsymbol{\alpha}^{(\pm)}=(\alpha_0^{(\pm)},\dots,\alpha_{N^{(\pm)}+1-M^{(\pm)}}^{(\pm)})^T$ and $\boldsymbol{\gamma}^{(\pm)}=(\gamma_0^{(\pm)},\gamma_1^{(\pm)})^T$ of the   activation functions $r^{(\pm)}$ by Algorithm \ref{alg5}.
\item Compute weights  $\mathbf{W}_1^{(\pm)}=(w_{11}^{(\pm)},\dots,w_{M^{(\pm)}1}^{(\pm)})^T$ and biases $\boldsymbol{b}_1^{(\pm)}=(b_{1 1}^{(\pm)},\dots,b_{M^{(\pm)} 1}^{(\pm)})^T$ of the hidden layers and weights $\mathbf{W}_2^{(\pm)} = ( w_{1 2}^{(\pm)},\dots,w_{ M^{(\pm)} 2}^{(\pm)})$ and  biases $b_2^{(\pm)} $ of the output layers of the neural network components $\Phi^{(\pm)} $ by Algorithm \ref{alg4}. Compute poles of the function $f$ via (\ref{sin}) and (\ref{sin1}).
\end{enumerate}

\noindent
\textbf{Output:} Parameters   $N^{(\pm)},  M^{(\pm)}\in \nn $,  coefficient vectors $\boldsymbol{\alpha}^{(\pm)}=(\alpha_0^{(\pm)},\dots,\alpha_{N^{(\pm)}+1-M^{(\pm)}}^{(\pm)})$ and $\boldsymbol{\gamma}^{(\pm)}=(\gamma_0^{(\pm)},\gamma_1^{(\pm)})$  of the activation functions,
weights $\mathbf{W}_1^{(\pm)}=(w_{11}^{(\pm)},\dots,w_{M^{(\pm)}1}^{(\pm)})^T \in \cc^{M^{(\pm)} \times 1}$ and biases $\boldsymbol{b}_1^{(\pm)}=(b_{1 1}^{(\pm)},\dots,b_{M^{(\pm)} 1}^{(\pm)})^T \in \cc^{M^{(\pm)} }$ of the hidden layers,
weights  $\mathbf{W}_2^{(\pm)} = ( w_{1 2}^{(\pm)},\dots,w_{ M^{(\pm)} 2}^{(\pm)}) \in \cc^{1 \times M^{(\pm)} }$ and biases $b_2^{(\pm)} \in \cc$ of the output layers of  $\Phi^{(\pm)}$. \\
\textbf{Complexity:} $\mathcal{O}(n \log n+(M_1^{(\pm)})^3 \ (2+\log(\delta^{(\pm)}+1))+n(M^{(\pm)})^2 )$, where $\delta^{(\pm)}=\min\{M_1^{(\pm)}-M^{(\pm)}, N_1^{(\pm)}-N^{(\pm)} \}$.
}

\end{algorithm}

\subsection{Robust numerical algorithm for the construction of the neural network $\Phi$}

Finally, we summarize our ideas from Subsections \ref{nclc}-\ref{sec43} in Algorithm \ref{alg3} for construction of the neural network $\Phi$ as in (\ref{nn}) for approximation of functions $f \in \mathcal{M}$.  
The numerical cost of this algorithm is $\mathcal{O}(n \log n+(M_1^{(\pm)})^3 \ (2+\log(\delta^{(\pm)}+1))+n(M^{(\pm)})^2 )$, where $\delta^{(\pm)}=\min\{M_1^{(\pm)}-M^{(\pm)}, N_1^{(\pm)}-N^{(\pm)} \}$. We remind the reader that as inputs  we employ a finite vector of Laurent coefficients $c_k(f)$ of $f$. In Algorithm \ref{alg3}, we compute  $c_k(f)$, $k=-n,\ldots,n$ via Algorithm \ref{alg2} using values of $f$ at $2 n$ equidistant points from the contour  $\Gamma$, and then for the method we only need  $c_k(f)$ for $k=-(N_1^{(\pm)}+M_1^{(\pm)}), \ldots ,N_1^{(\pm)}+M_1^{(\pm)}$. Thus, we assume that the number $n$ is large enough (such that the condition $n\geq \max\{ N_1^{(+)}+M_1^{(+)}, N_1^{(-)}+M_1^{(-)} \}$ holds). 
Note that the computational cost of the method remains unchanged if Laurent coefficients are already provided.





\section{Numerical experiments}
\label{nex}

We now discuss  numerical experiments to illustrate the performance of our method, and then show how it can be used to construct extensions of the time-dependent solutions  of nonlinear PDEs. The algorithm is implemented in \textsc{Matlab} and use IEEE standard floating point arithmetic with double precision.

\subsection{Learning functions with singularities}
\label{ex1}

First, we consider the example of approximation of a function $f \in \mathcal{M}$ defined by
\begin{equation}\label{exfun}
    f(z)=\frac{\cos(z)}{\prod_{j=1}^5(z-\zeta_j)^2 \prod_{j=1}^{20}(z-\varsigma_j) \prod_{j=1}^{10}(z-\pi_j)},
\end{equation}
where  $\zeta_j=0.8 \, \mathrm{e}^{\frac{\pi \mathrm{i} j}{5}}$, $j=1,\dots,5$ are singularities with double multiplicities,  $\varsigma_j=0.7 \, \mathrm{e}^{\frac{\pi \mathrm{i} j}{10}}$, $j=1,\dots,10$, $\varsigma_j=-0.9 \, \mathrm{e}^{\frac{\pi \mathrm{i} j}{10}}$, $j=11,\dots,20$, and $\pi_j=1.2\, \mathrm{e}^{\frac{\pi \mathrm{i} (j+5)}{10}}$, $j=1,2,3$, $\pi_j=-1.2\, \mathrm{e}^{\frac{\pi \mathrm{i} (j+5)}{10}}$, $j=4,5,6$, $\pi_j=1.2\, \mathrm{e}^{-\frac{\pi \mathrm{i} (j+5)}{10}}$, $j=7,\dots,10$  are simple poles. The function $f$ is analytic in the annulus $\mathcal{A}$ with $r=0.9$ and $R=1.2$ and has in general 40 pole-type singularities, 30 of which are located in the annulus $\Tilde{\mathcal{A}}^{(-)}$ with $\Tilde{r}<0.9$ and the other 10 in the annulus $\Tilde{\mathcal{A}}^{(+)}$ with $\Tilde{R}>1.2$ (see Figure \ref{figan}).  

\begin{figure}[h!]
  \centering
    \includegraphics[width=0.55\linewidth]{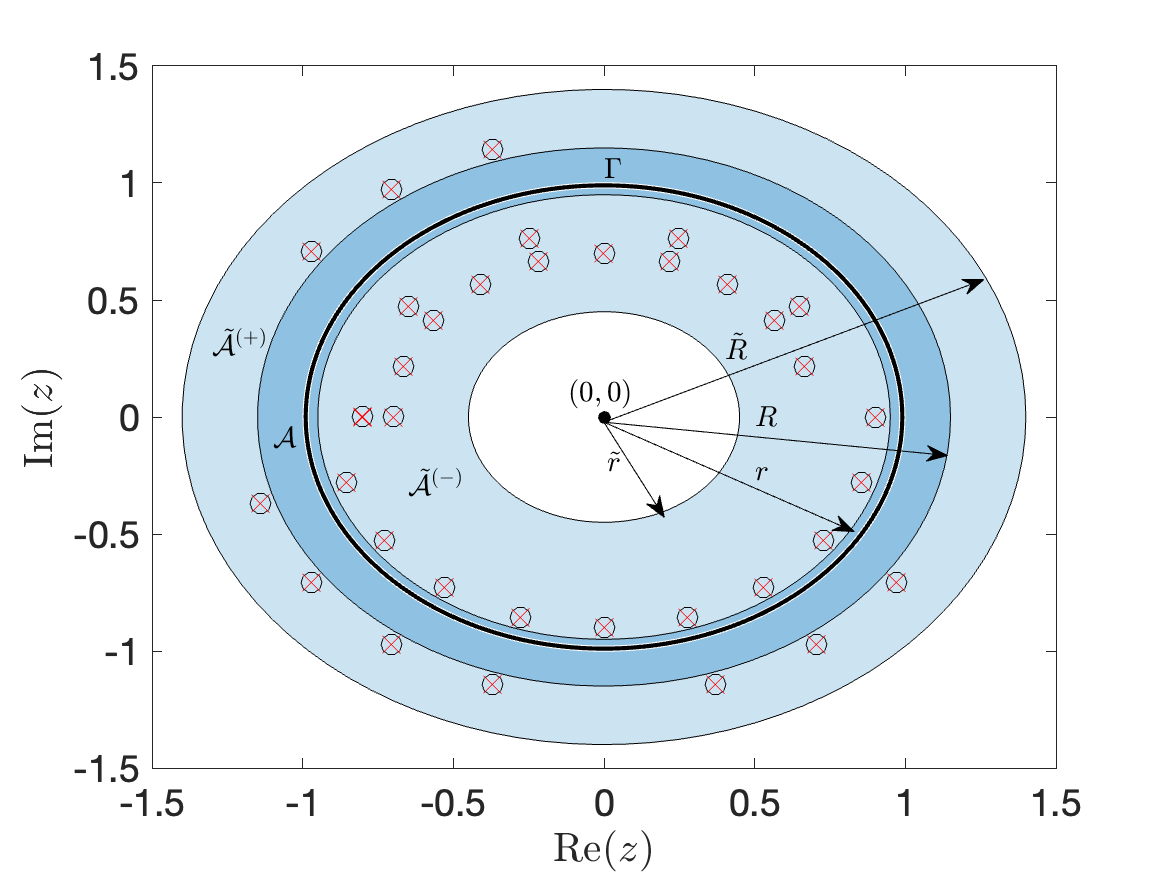}
      \caption{ Annulus $\mathcal{A}$ (dark blue region) in which a function $f$ given by  (\ref{exfun}) is analytic, annuli $\Tilde{\mathcal{A}}^{(-)}$ and   $\Tilde{\mathcal{A}}^{(+)}$ in which $f$ has  meromorphic extension, the contour $\Gamma$,  poles of a function $f$ as in (\ref{exfun})  (black circles) and their estimated location (red crosses) computed by formulas (\ref{sin}) and (\ref{sin1}) applying Algorithm \ref{alg3}.}
  \label{figan}
\end{figure}

We construct an approximation to $f$ in the form (\ref{nn}) by  Algorithm \ref{alg3}. As input data we use $300$ function values $f(\rho \mathrm{e}^{\frac{2\pi \mathrm{i} j}{300} })$, $j=0,\dots,299$, i.e.  $n=150$, located on the contour  $\Gamma$ as in (\ref{count}) with $\rho=0.99$.  We choose parameters  $N_1^{(\pm)}=70$ and $M_1^{(\pm)}=70$. Algorithm \ref{alg1} reduces  the number of the neurons of the hidden layer of the component $\Phi^{(-)}$ from $M_1^{(-)}=70$ to $M^{(-)}=30$, and of the component $\Phi^{(+)}$ from $M_1^{(+)}=70$ to $M^{(+)}=10$, and finds the optimal parameters that ensure good approximation $N^{(-)}=30$ and $N^{(+)}=10$. In case of the component $\Phi^{(+)}$, the Step 2 of Algorithm \ref{alg1} is performed 6 times. The parameters $N_1^{(+)}=70$ and $M_1^{(+)}=70$ are first reduced to $N_1^{(+)}=28$ and $M_1^{(+)}=28$, in the next four times  to 20, 15, 12 and 11, respectively, and finally at the 6th iteration to $N^{(+)}=10$ and $M^{(+)}=10$.  For the component $\Phi^{(-)}$, the Step 2 of Algorithm \ref{alg1} is performed only a single time. 

\begin{figure}[h!]
  \centering
  \begin{subfigure}[b]{0.31\linewidth}
    \includegraphics[width=1\linewidth]{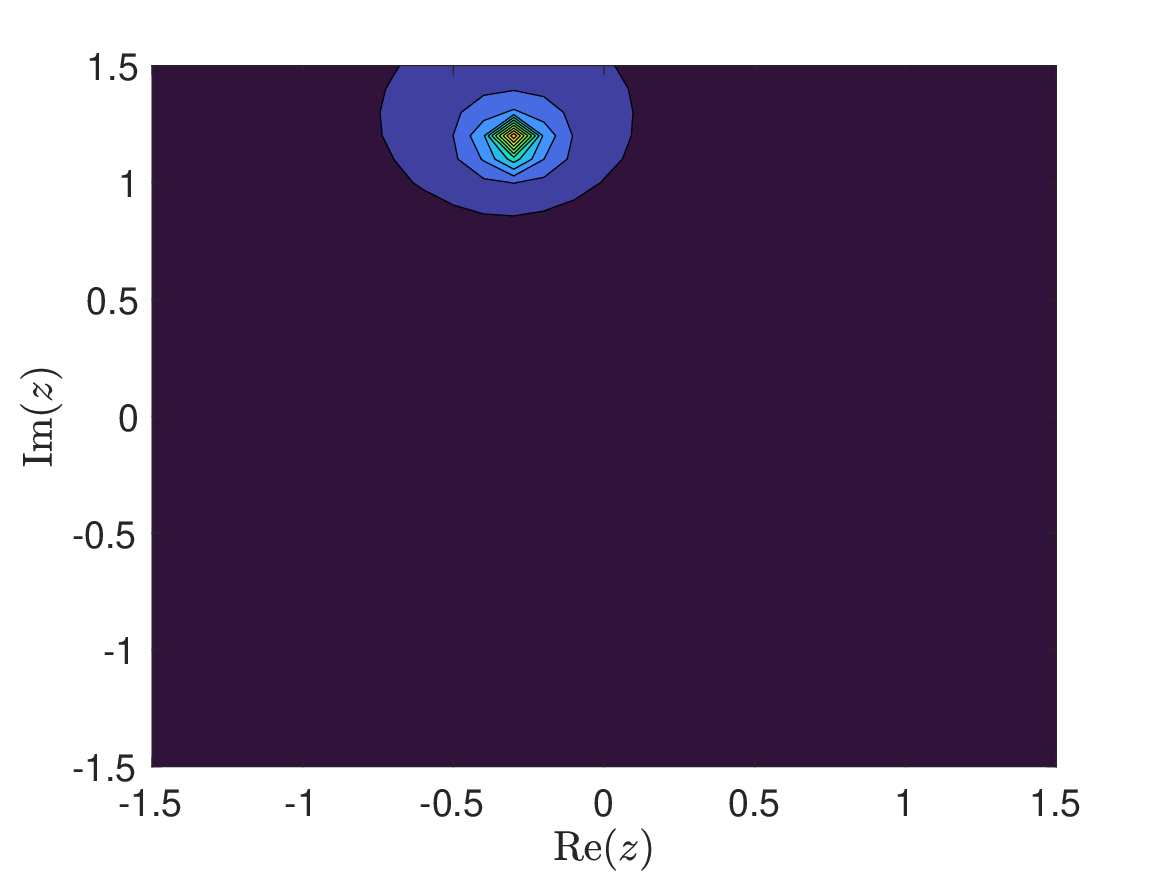}
  \end{subfigure}
  \begin{subfigure}[b]{0.31\linewidth}
    \includegraphics[width=1\linewidth]{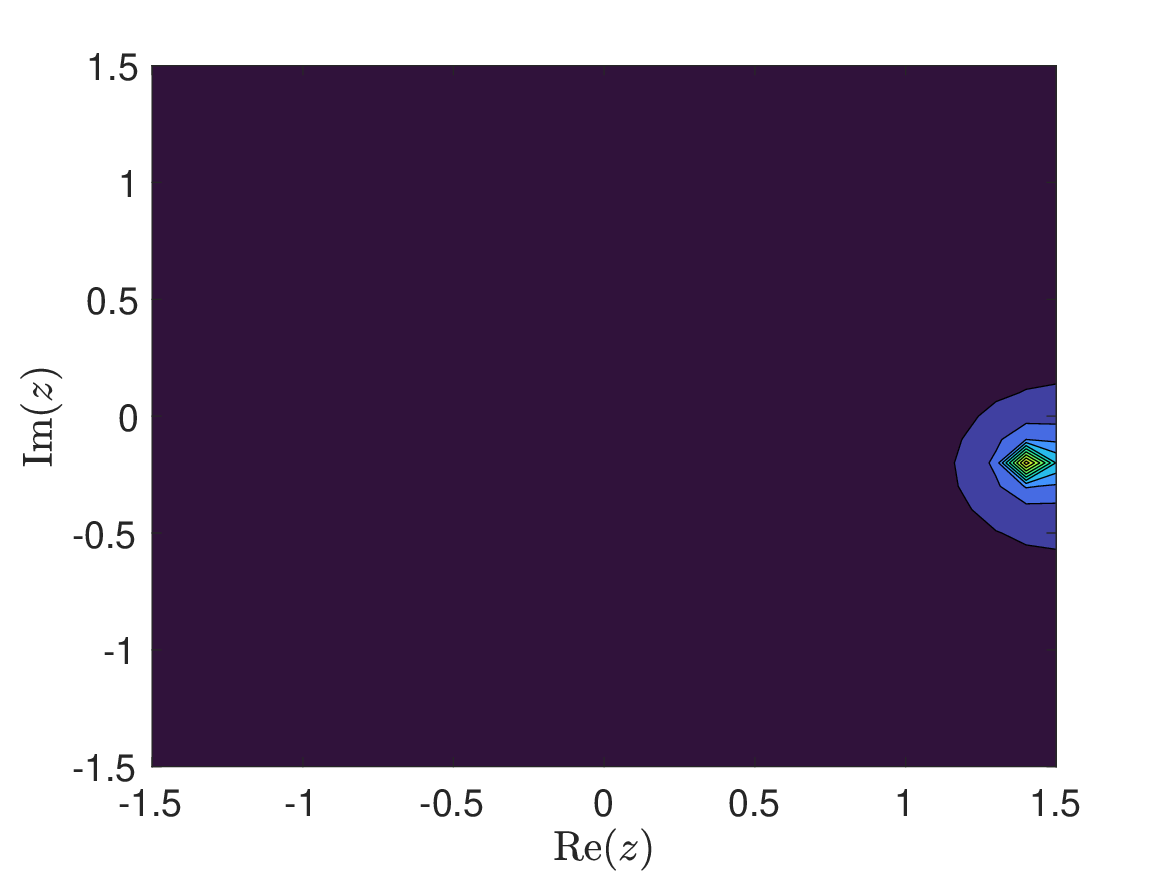}
  \end{subfigure}
    \begin{subfigure}[b]{0.31\linewidth}
    \includegraphics[width=1\linewidth]{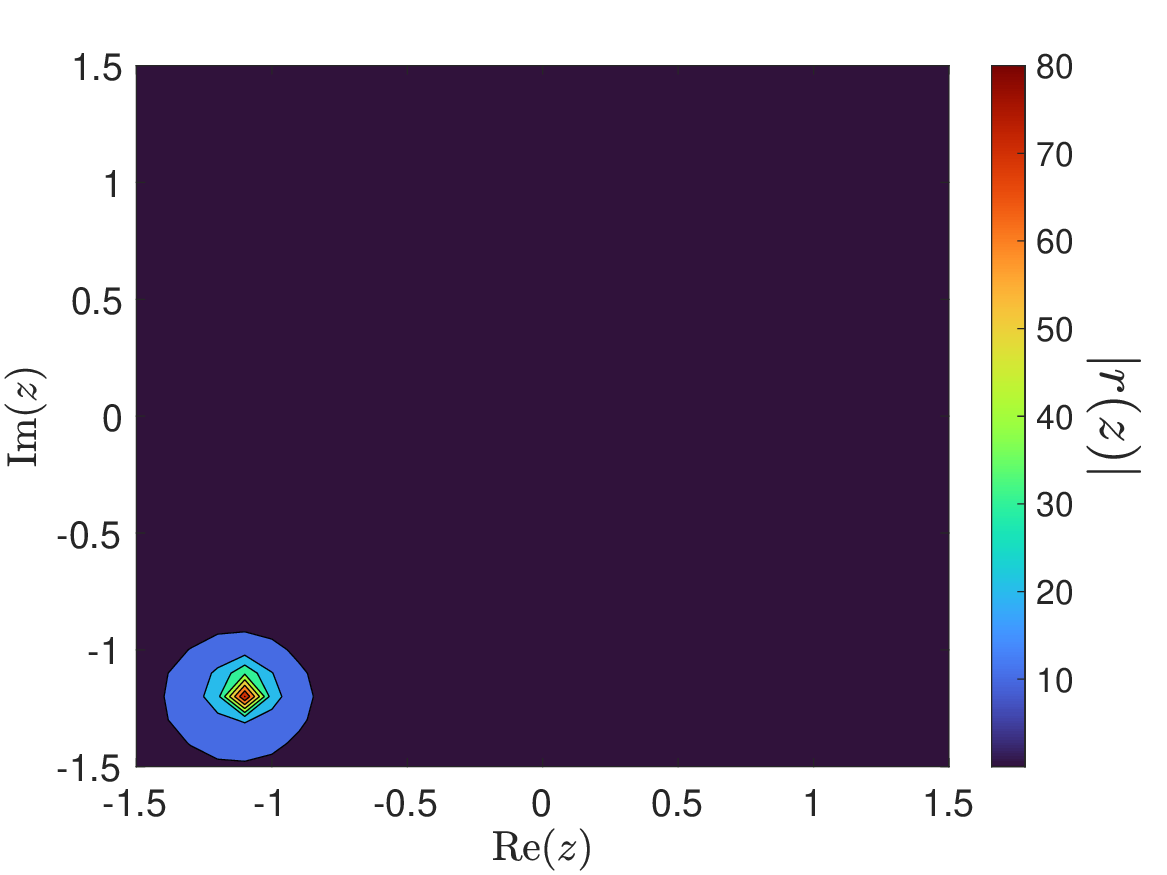}
  \end{subfigure}
  \caption{Activation functions $r$ with a pole $z_0=-0.3+ \frac{35}{30} \mathrm{i}$ (left), $z_0=1.43- 0.2 \mathrm{i}$  (middle), and $z_0=-1.1- \frac{7}{6} \mathrm{i}$  (right) employed in Subsection  \ref{ex1}.}
  \label{figac1}
\end{figure}

For both components $\Phi^{(\pm)}$, we use the same activation function $r^{(\pm)}=r$ that is computed as the Laurent-Pad\'{e} approximation of type $(1,1)$ to $\omega(z)=\frac{\cos z}{z-z_0}$ by  Algorithm \ref{alg5}. 
We now consider three  different experiments. For each of these three cases, we allow a random choice of $C_{k 0}^{(-)}$, $k=1,\dots,29$, and $C_{k 0}^{(+)}$, $k=1,\dots,9$ from the domain $[-1,-0.5]\times [0.5,1] \, \mathrm{i}$, and choose different locations of the pole $z_0$ of the activation function $r$:
\begin{itemize}
    \item \textbf{Location 1:} $z_0=-0.3+ \frac{35}{30} \mathrm{i}$ (Figure \ref{figac1}, left);  
        \item \textbf{Location 2:}    $z_0=1.43- 0.2 \mathrm{i}$ (Figure \ref{figac1}, middle); 
         \item \textbf{Location 3:}    $z_0=-1.1- \frac{7}{6} \mathrm{i}$ (Figure \ref{figac1}, right).  
\end{itemize}

In Figure \ref{figan}, we demonstrate the accuracy of the recovery of poles of $f$ as in (\ref{exfun}) by Algorithm \ref{alg3} for the pole $z_0$ at Location 1 (for Locations 2 and 3 there is no difference in the results).     Figure \ref{figweight}  shows the corresponding location of weights $w_{k 1}^{(-)}$, $k=1,\dots,30$ and $w_{k 1}^{(+)}$, $k=1,\dots,10$ for all three cases.
Note that for each  pole $\zeta_j$ with double multiplicity, we get two different neurons of the hidden layer of the neural network component $\Phi^{(-)}$. Thus, in Figure \ref{figan} we observe 35 pairwise distinct poles of $f$ and in Figure \ref{figweight}, 40 weights that correspond to 40 neurons of $\Phi$.

\begin{figure}[h!]
  \centering
  \begin{subfigure}[b]{0.31\linewidth}
    \includegraphics[width=1\linewidth]{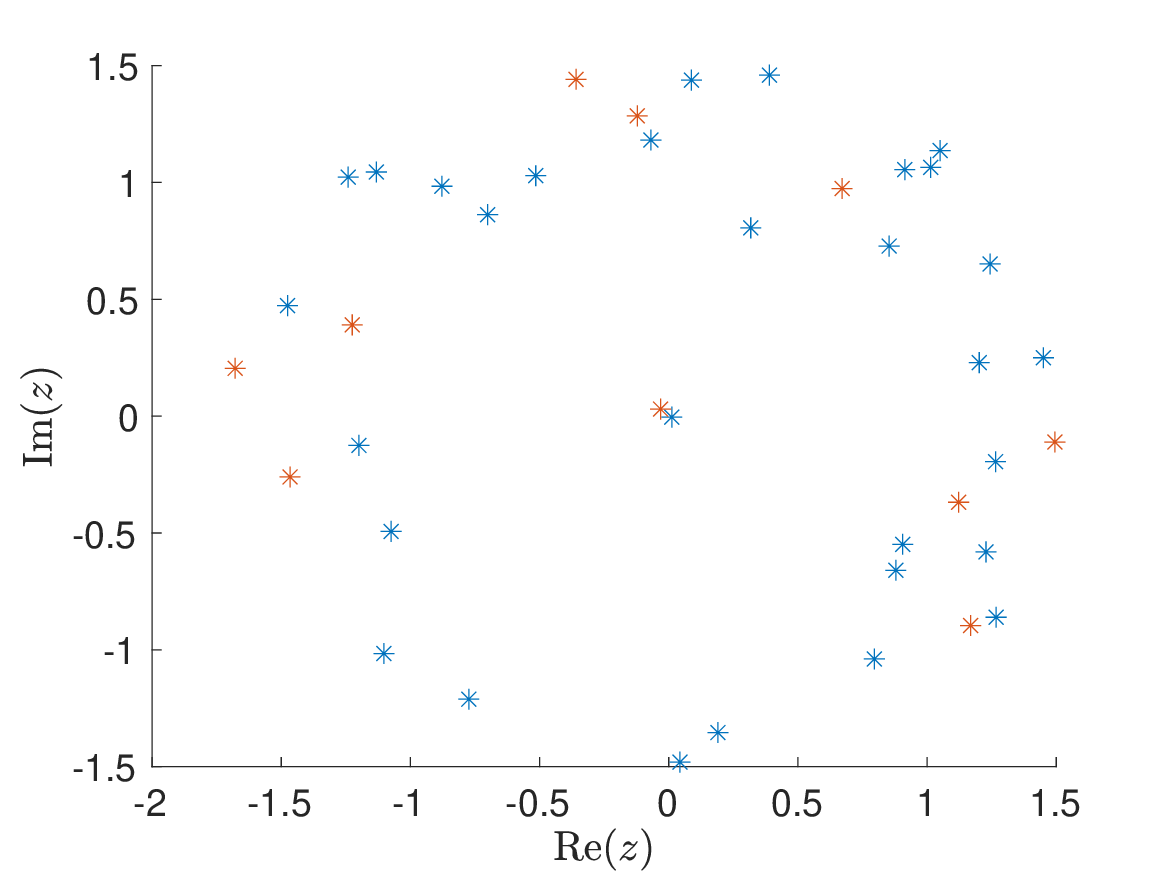}
  \end{subfigure}
  \begin{subfigure}[b]{0.31\linewidth}
    \includegraphics[width=1\linewidth]{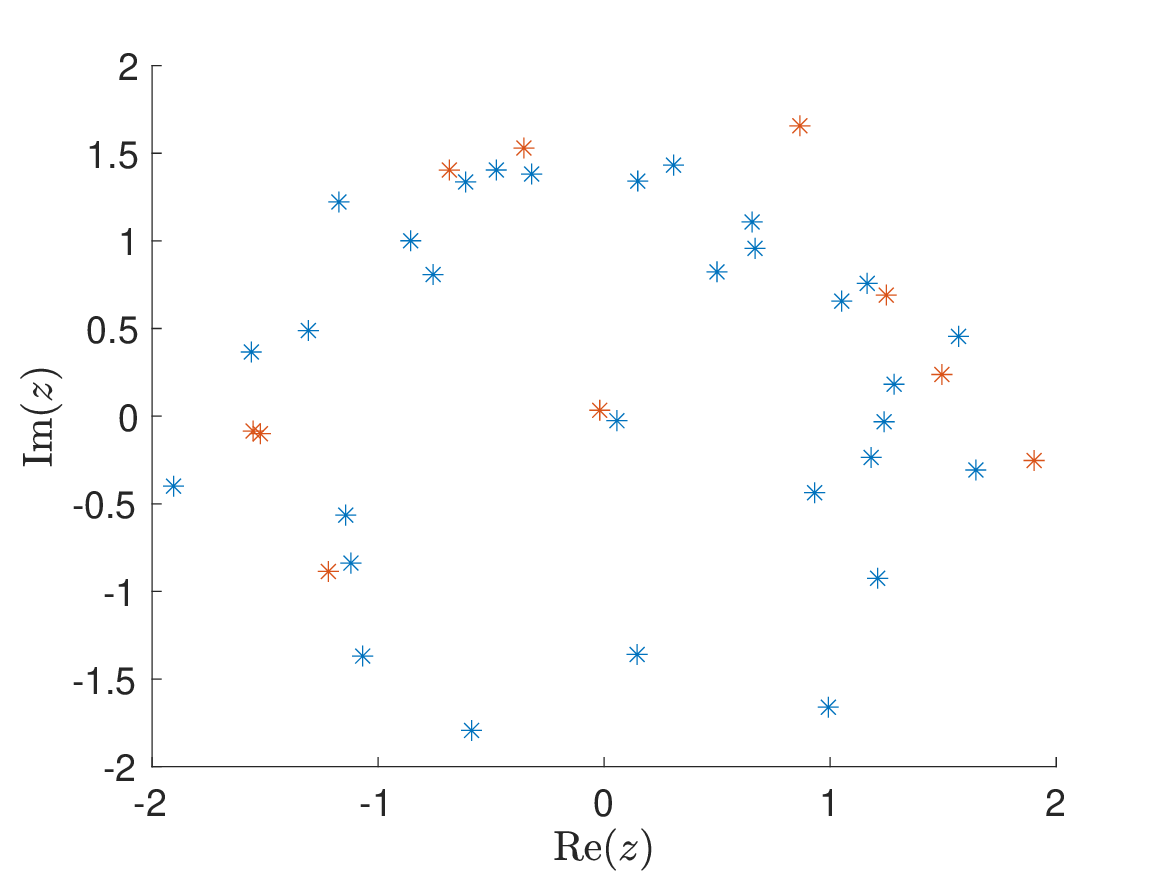}
  \end{subfigure}
  \begin{subfigure}[b]{0.31\linewidth}
    \includegraphics[width=1\linewidth]{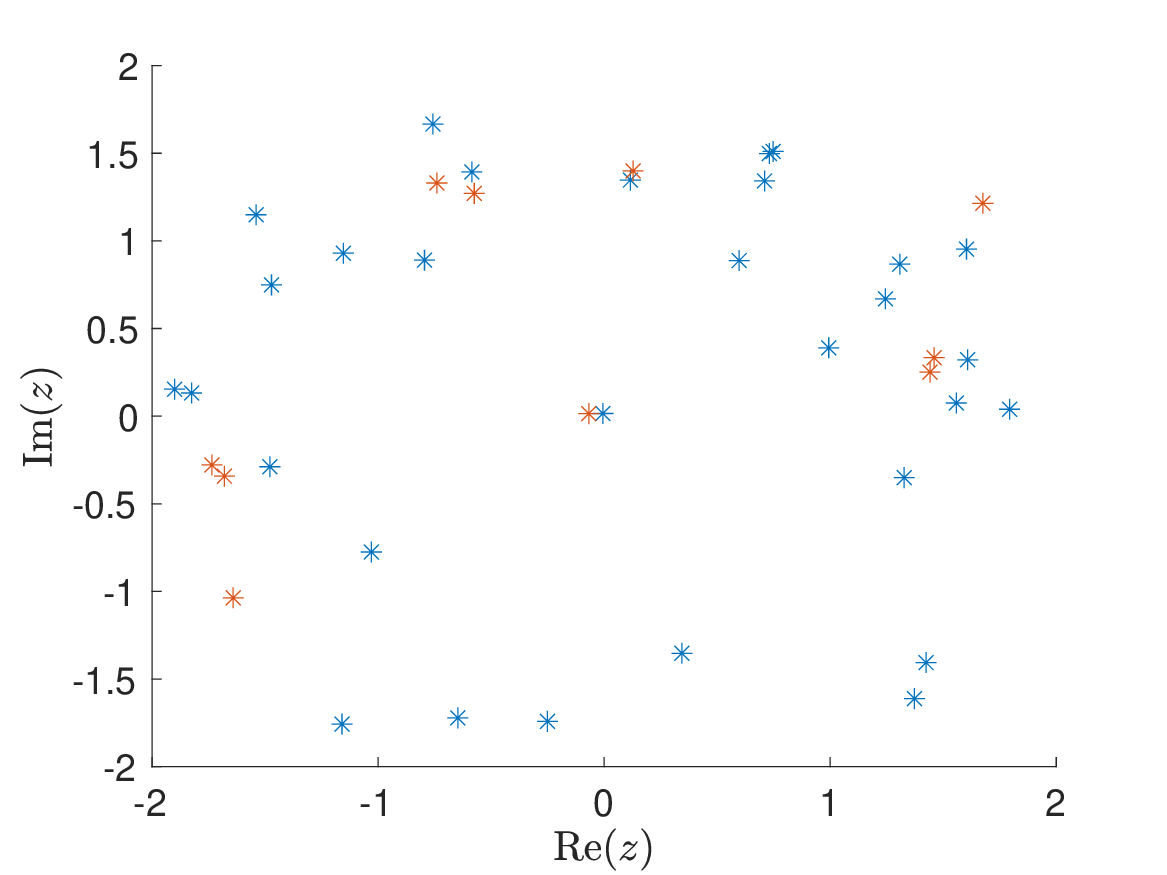}
  \end{subfigure}
  \caption{Weights $w_{k1}^{(-)}$, $k=1,\dots,30$ (blue) and $w_{k1}^{(+)}$, $k=1,\dots,10$ (orange) computed for Location 1 (left), Location 2 (middle) and Location 3 (right) of the pole $z_0$.}
  \label{figweight}
\end{figure}

\subsection{Construction of extensions of the time-dependent solutions of nonlinear PDEs and study  the dynamics of their singularities}

In this subsection, we describe the application of our method to the construction of  extensions of the time-dependent solutions of nonlinear PDEs into the complex plane and study  the dynamics of their singularities. Namely,  we consider  the following equation, 
\begin{equation}\label{pde1}
    v_t-\nu v_{xx}+(H(v)v)_x=0, 
\end{equation}
where $\nu>0$ and $H$ is the Hilbert transform. The equation (\ref{pde1}) is an analog to the conservative form of Burgers' equation and was considered in \cite{BM96,W03}. The explicit solution $v(x,t): \rr\times[0,1]\rightarrow \rr$ to this equation is known and is given by 
\begin{equation}\label{exsol}
    v(x,t)=\eta+\frac{\nu(1-\beta^2 \mathrm{e}^{2\eta t})}{1+\beta^2 \mathrm{e}^{2\eta t}-2 \beta \, \mathrm{e}^{\eta t} \cos x},
\end{equation}
where $\eta, \beta>0$ are arbitrary constants. Formally substituting a complex variable $z$ instead of the real variable $x$ in   (\ref{exsol}), we obtain an extension of the solution (\ref{exsol})   to the complex plane, which we denote by $u(z,t): \cc\times[0,1]\rightarrow \cc$. 
The singularities of the extended solution are simple poles located at 
$$
    z(t)=2 \ell \pi \pm \mathrm{i}(\eta t + \ln \beta), \quad \ell=0,\pm 1, \pm 2, \dots.
    $$
When $\beta<1$, these singularities move toward the real axis.  They  reach the real axis at the time point $t=-\ln \beta / \eta$ at which occurs the blow up of the extended solution.


Similarly, as in \cite{W03}, we consider a solution to (\ref{pde1}) that is $2\pi$-periodic in $x$. Using the Fourier spectral method, we construct the corresponding solution in $[-\pi,\pi]$ as truncated Fourier series
\begin{equation}\label{appsr}
    v(x,t)\approx \sum\limits_{k=-n}^n a_k(t) \, \mathrm{e}^{\mathrm{i} k x},
\end{equation}
for some $n \in \nn$.
The Fourier coefficients $a_k(0)$ are computed using the initial condition values at $2 n$ points $v\left( \frac{2 \pi j}{2 n} ,0 \right)$, $j=0,\dots,2 n-1$, applying FFT. For other time points $t \in (0,1]$,  the coefficients $a_k(t)$ are computed as the solution to the system of ODEs
$$
 \frac{\mathrm{d} a_k(t)}{\mathrm{d} t } + \nu k^2 a_k(t) - k \sum\limits_{ \substack{j+\ell=k \\ |j|,|\ell|\leq n}} \mathrm{sign}(j) a_j(t) a_{\ell}(t)=0, \quad k=-n,\dots,n.
 $$
Further we discuss a method to construct an extension of the solution from the real line to the complex plane using the modified neural network ideas from Section \ref{spt}. We proceed as follows.  First, we represent the extended solution in the following form,
$$
    u(z,t)\approx \sum\limits_{k=-n}^n a_k(t) \, \mathrm{e}^{\mathrm{i} k z}=  u^{(-)}(z,t)+u^{(+)}(z,t),
    $$
where  $u^{(-)}(z,t):=\frac{a_0(t)}{2}+\sum_{k=1}^{n} a_{-k}(t) \mathrm{e}^{-\mathrm{i} k z}$   and $u^{(+)}(z,t):=\frac{a_0(t)}{2}+\sum_{k=1}^n a_k(t) \mathrm{e}^{\mathrm{i} k z}$. Next, functions $u^{(\pm)}(z,t)$ are approximated by the corresponding neural network components
\begin{align}
   \Phi^{(+)}(\mathrm{e}^{\mathrm{i} z},t) & =  \mathbf{W}_2^{(+)}(t) r^{(+)} \left( \mathbf{W}_1^{(+)}(t) \mathrm{e}^{\mathrm{i}  z} - \boldsymbol{b}_1^{(+)}(t)  \right) - b_2^{(+)}(t), \label{f1t} \\
      \Phi^{(-)}(\mathrm{e}^{\mathrm{i} z},t) & =  \mathbf{W}_2^{(-)}(t) r^{(-)} \left( \mathbf{W}_1^{(-)} (t)\mathrm{e}^{-\mathrm{i} z} - \boldsymbol{b}_1^{(-)} (t) \right) - b_2^{(-)}(t), \label{f2t}
\end{align}
and we get the final formula for the extended solution in $\cc$
\begin{equation}\label{appsnn}
    u(z,t)\approx \Phi(\mathrm{e}^{\mathrm{i} z},t)=  \Phi^{(+)}(\mathrm{e}^{\mathrm{i} z},t)+\Phi^{(-)}(\mathrm{e}^{\mathrm{i} z},t).
\end{equation}
The components $\Phi^{(+)}$ and $ \Phi^{(-)}$ are constructed using ideas similar to those employed in Algorithm \ref{alg3}, but 
replacing Laurent coefficients by Fourier coefficients $a_k(t)$ and taking into account their dependency on time.  The component $\Phi^{(+)}$ now detects the locations of the singularities according to the formula
\begin{equation}\label{spl}
\mathrm{e}^{\mathrm{i} s_{\ell}^{(+)}(t)}=  \frac{b^{(+)}_{\ell 1}(t)+z_0^{(+)} }{w^{(+)}_{\ell 1}(t)}  \quad \text{or }   \quad 
 s_{\ell}^{(+)}(t)= -\mathrm{i}  \ln \left(  \frac{b^{(+)}_{\ell 1}(t)+z_0^{(+)} }{w^{(+)}_{\ell 1}(t)} \right),
\end{equation}
such that $\mathrm{Im} \, s_{\ell}^{(+)}(t)<0$ for $\ell=1,\dots,M^{(+)}$ at given time point $t$. Correspondingly,  the component $\Phi^{(-)}$ detects the locations of the singularities according to the formula 
\begin{equation}\label{spl1}
 \mathrm{e}^{\mathrm{i} s_{\ell}^{(-)}(t)}=\frac{w^{(-)}_{\ell 1}(t)}{b^{(-)}_{\ell 1}(t)+z_0^{(-)}}  \quad \text{or }   \quad      s_{\ell}^{(-)}(t)= -\mathrm{i}  \ln \left(  \frac{w^{(-)}_{\ell 1}(t)}{b^{(-)}_{\ell 1}(t)+z_0^{(-)}} \right), 
\end{equation}
such that $\mathrm{Im} \, s_{\ell}^{(-)}(t)>0$ for $\ell=1,\dots,M^{(-)}$ at given time point $t$.  Note that for this modified method, the weights $w^{(\pm)}_{\ell 1}(t)$  and biases    $b^{(\pm)}_{\ell 1}(t)$ are carrying information about $\mathrm{e}^{\mathrm{i}  s_{\ell}^{(\pm)}(t)}$ (and thus not directly about $s_{\ell}^{(\pm)}(t)$). 

\begin{figure}[h!]
  \centering
  \begin{subfigure}[b]{0.31\linewidth}
    \includegraphics[width=1\linewidth]{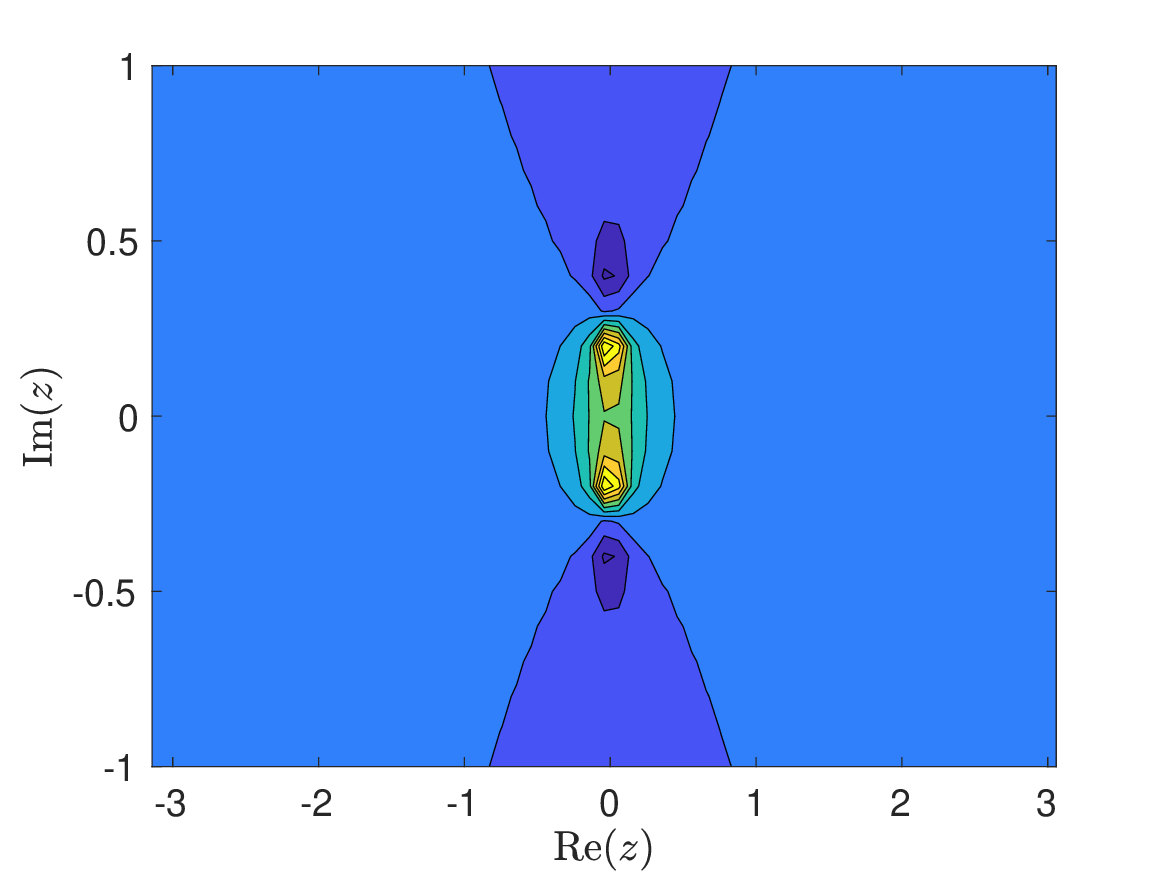}
  \end{subfigure}
  \begin{subfigure}[b]{0.31\linewidth}
    \includegraphics[width=1\linewidth]{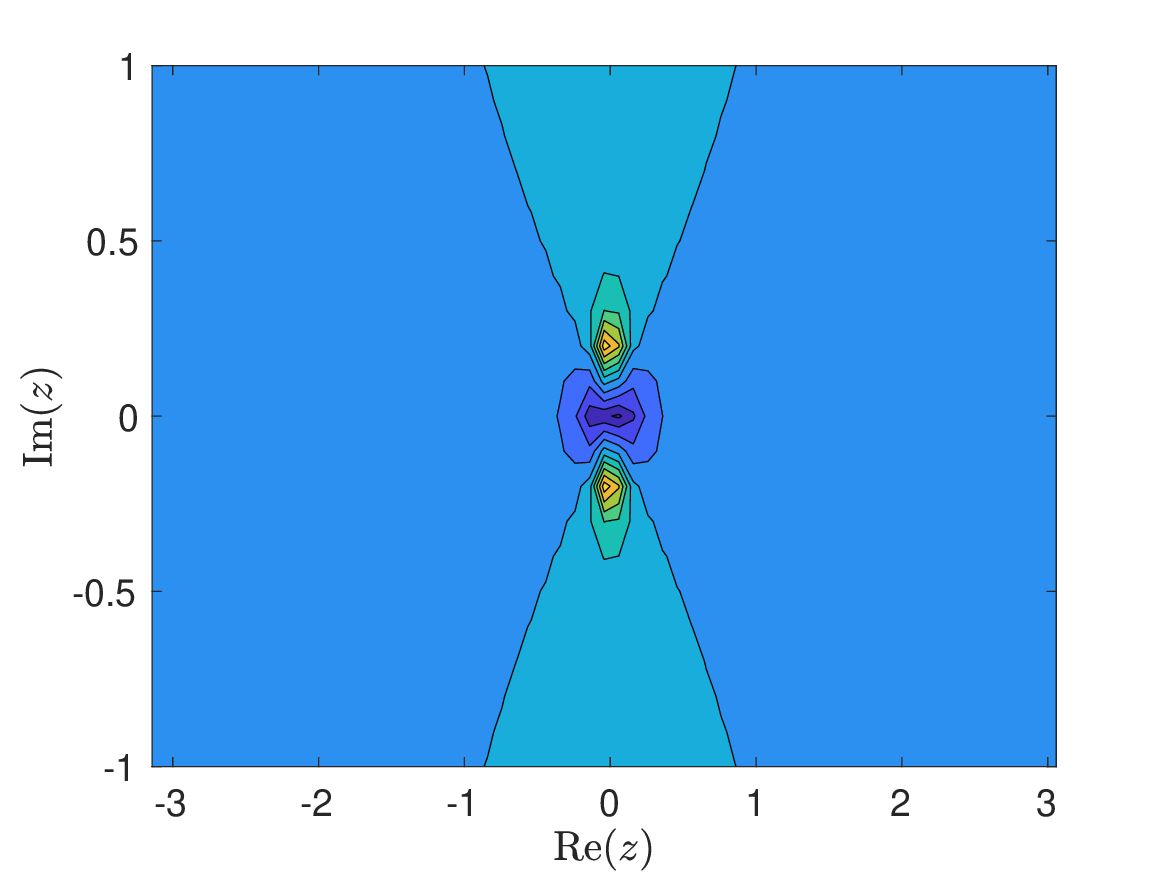}
  \end{subfigure}
  \begin{subfigure}[b]{0.31\linewidth}
    \includegraphics[width=1\linewidth]{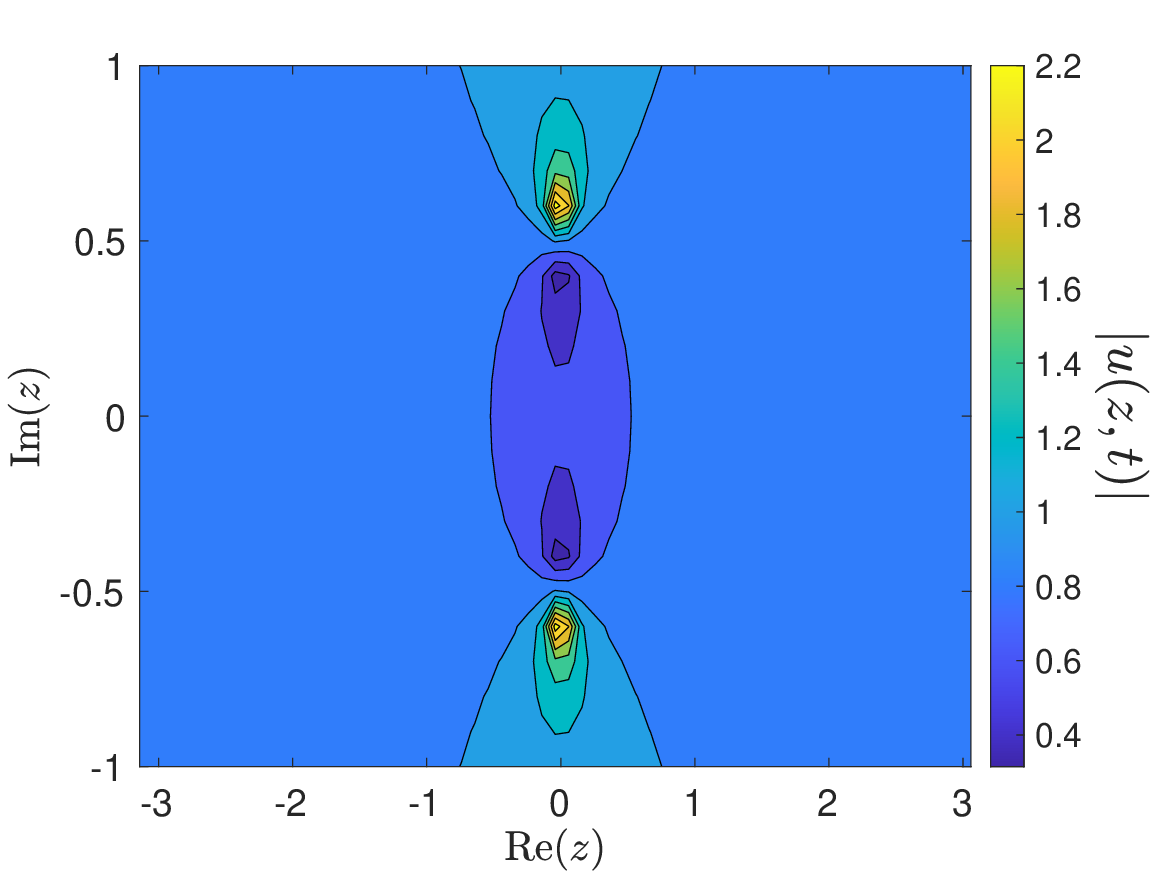}
  \end{subfigure}
  \begin{subfigure}[b]{0.31\linewidth}
    \includegraphics[width=1\linewidth]{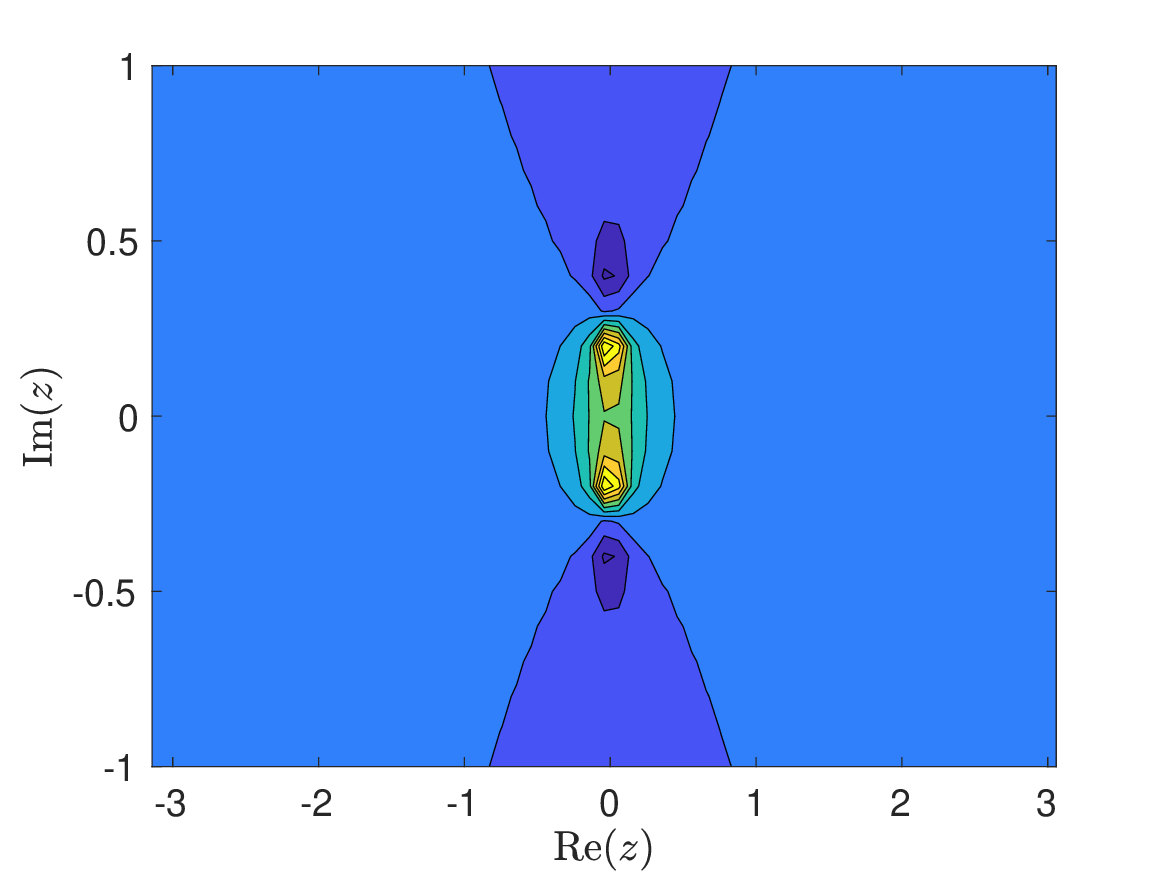}
    \caption{$t=0$}
  \end{subfigure}
  \begin{subfigure}[b]{0.31\linewidth}
    \includegraphics[width=1\linewidth]{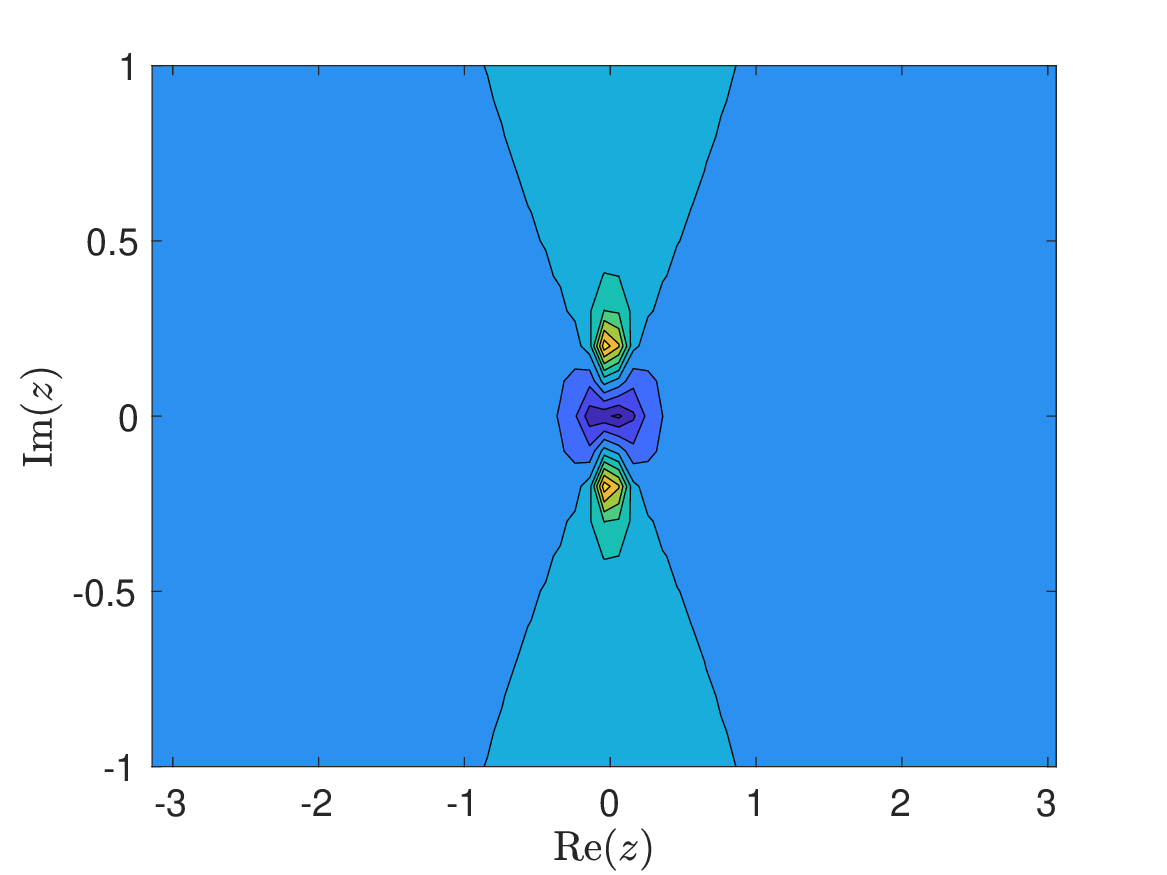}
    \caption{$t=0.4$}
  \end{subfigure}
  \begin{subfigure}[b]{0.31\linewidth}
    \includegraphics[width=1\linewidth]{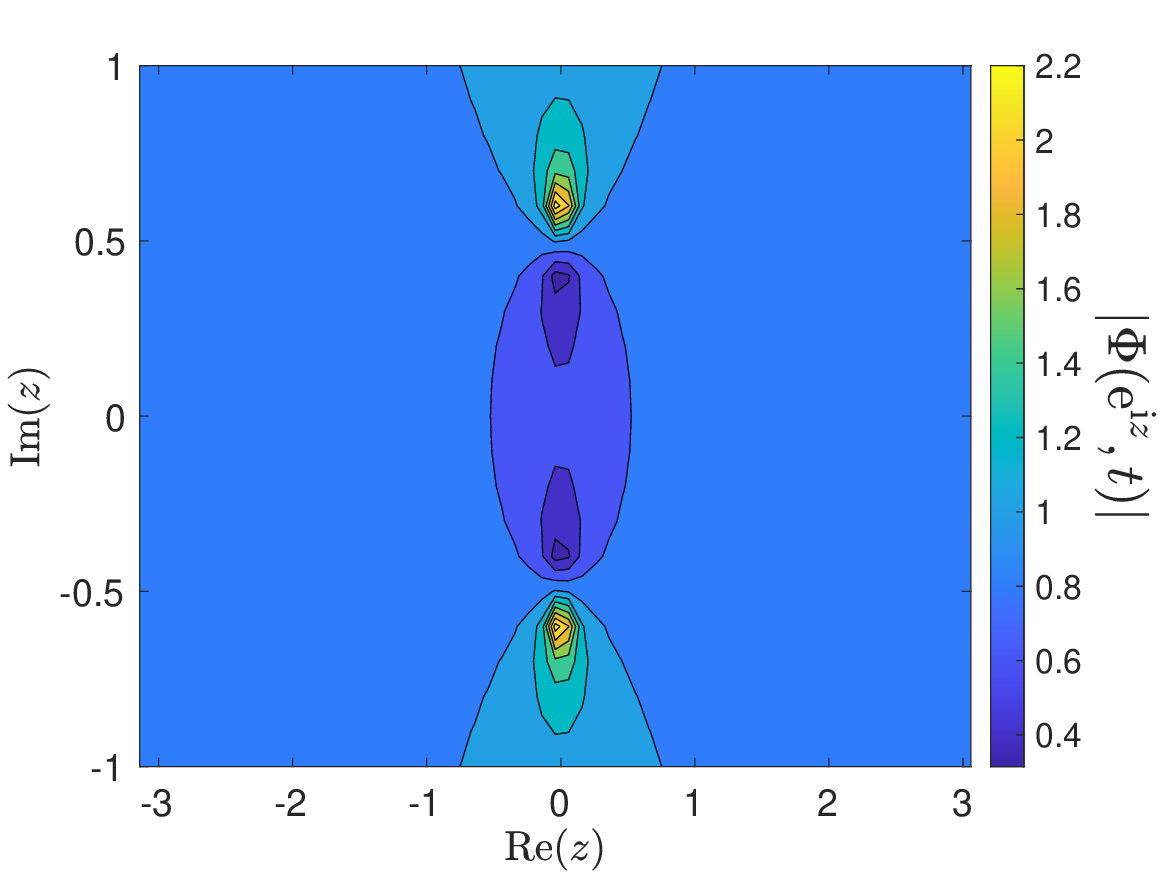}
    \caption{$t=0.8$}
  \end{subfigure}
  \caption{Extended solution $u(z,t)$ (first row) and its approximation $\Phi(\mathrm{e}^{\mathrm{i} z},t)$ as in (\ref{appsnn}) (second row)  for different time points $t=0, \, 0.4, \, 0.8$. The two poles of the extended solution cross in the real line at the time point $t=0.25$.}
  \label{fig1}
\end{figure}

We consider the PDE (\ref{pde1}) with parameters $\eta=1$, $\nu=0.1$ and $\beta=\mathrm{e}^{-1/4}$, and investigate the extended solution $u(z,t)$ for  $z \in [-\pi, \pi]\times  \mathrm{i} [-1, 1]$ and $t \in [0,1]$. In this domain, we then have two singularities of the form $z_1(t)=\mathrm{i}( t - \frac{1}{4})$ and $z_2(t)=-\mathrm{i}( t - \frac{1}{4})$, and the blow-up occurs in the real line at the time point $t= \frac{1}{4}$. We estimate the locations of singularities by  (\ref{spl})--(\ref{spl1}) and construct the extended solution as in  (\ref{appsnn}).  We employ parameters $n=40$, $\rho=0.99$, $N_1^{(\pm)}=M_1^{(\pm)}=10$. 
For each considered time point $t$, the method gives us the optimal values $N^{(\pm)}=M^{(\pm)}=1$. Note that we must set $\mathrm{tol}=10^{-3}$ in order to apply Algorithm \ref{alg1} to determine the correct parameters $M^{(\pm)}=1$ for each time point $t$, especially those close to the blow-up.
We  use again the same activation function  $r^{(\pm)}=r$  for both neural network components $\Phi^{(\pm)}$ that is constructed as the Laurent-Pad\'{e} approximation of type $(1,1)$ to $\omega(z)=\frac{\cos z}{z-z_0}$ with $z_0=-1.2$ by Algorithm \ref{alg5}. 
In Figure \ref{fig1}, we present a comparison of the explicit extended solution $u(z,t)$ (as in (\ref{exsol}) but with a complex $z$) and its approximation $\Phi(\mathrm{e}^{\mathrm{i} z},t)$ as in (\ref{appsnn})  for  time points $t=0, \, 0.4, \, 0.8$. Figure \ref{fig1} also allows us to observe the dynamics of the complex singularities $z_1(t)$ and $z_2(t)$. 

\begin{figure}[h!]
  \centering
  \begin{subfigure}[b]{0.45\linewidth}
    \includegraphics[width=0.9\linewidth]{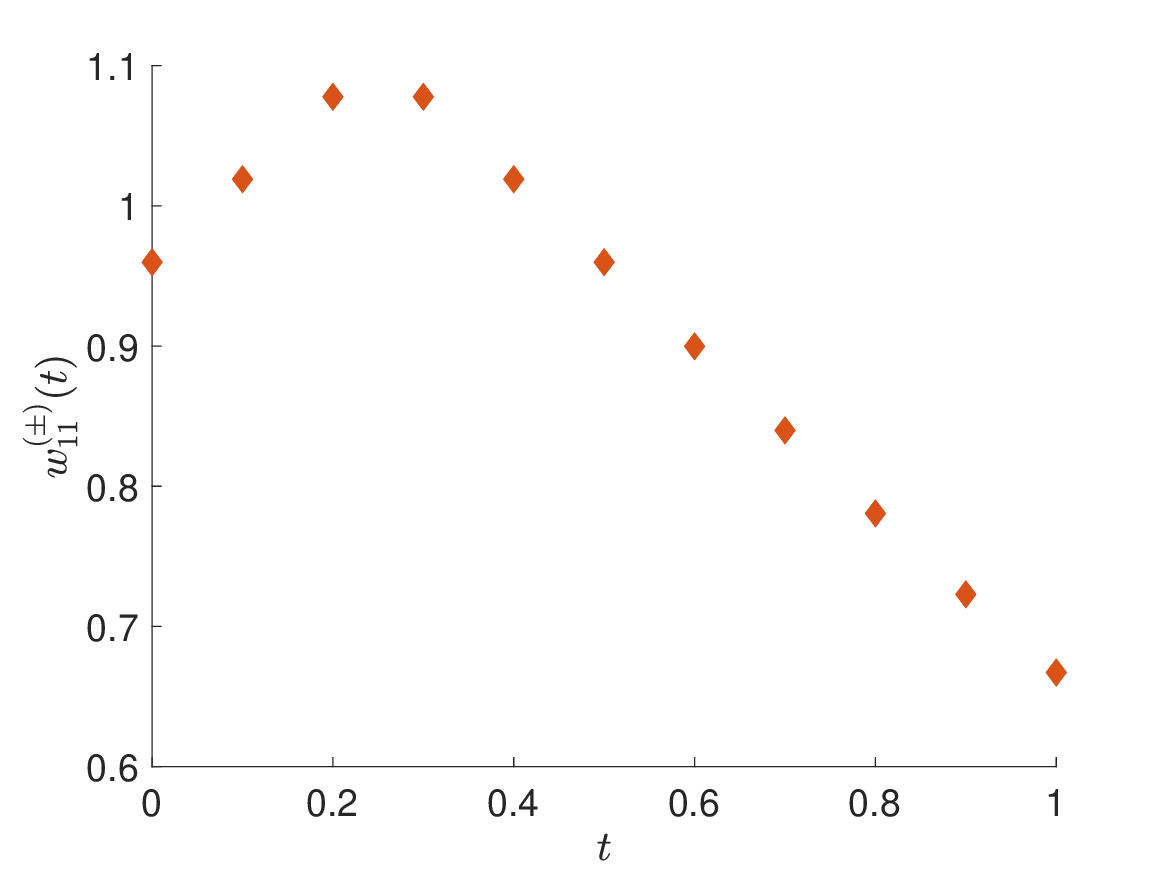}
  \end{subfigure}
  \begin{subfigure}[b]{0.45\linewidth}
    \includegraphics[width=0.9\linewidth]{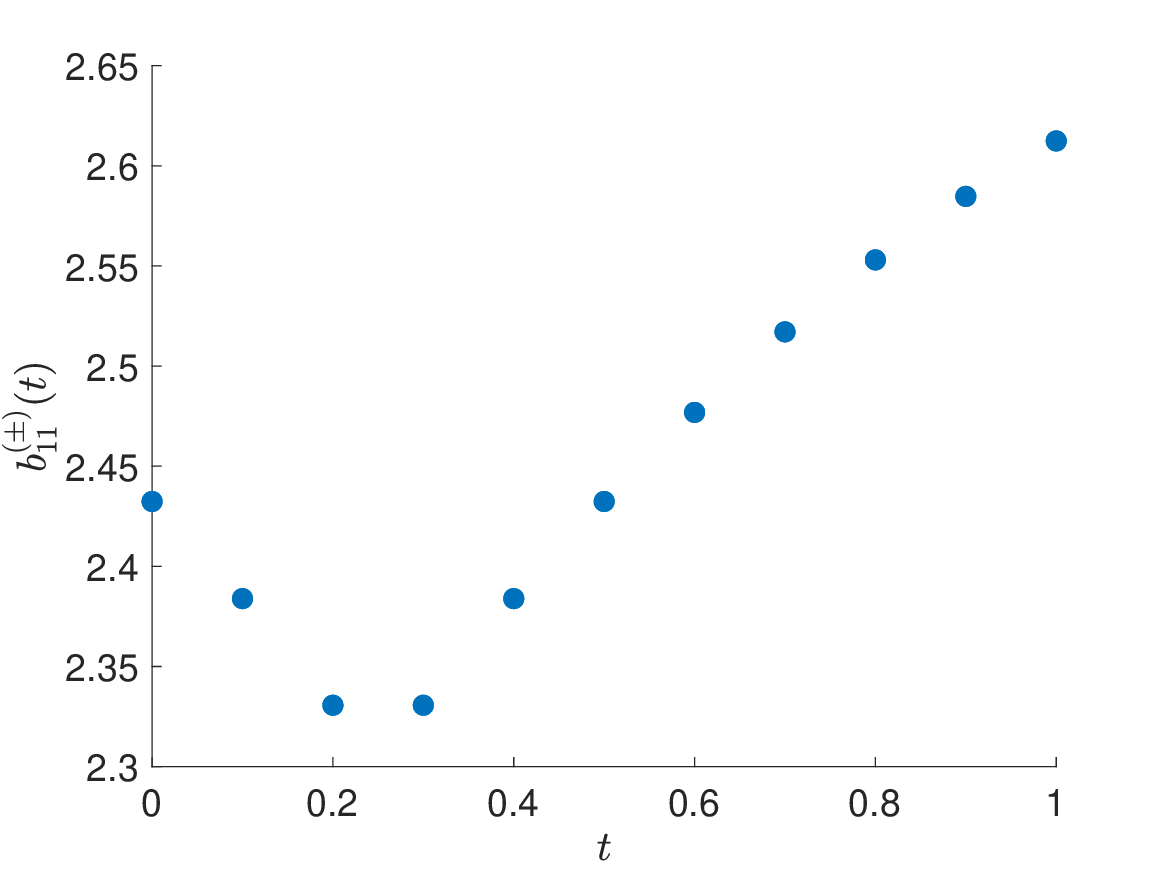}
  \end{subfigure}
  \caption{ Weights $w_{11}^{(\pm)}(t)$  (left) and biases $b_{11}^{(\pm)}(t)$  (right) for time points $t=0, 0.1, 0.2,\dots,1$ and a pole $z_0=-1.2$ of the activation function.}
  \label{fig2}
\end{figure}

Further, we are also interested to investigate the behavior of weights $\mathbf{W}_1^{(\pm)}(t)=w_{11}^{(\pm)}(t)$ and biases $\boldsymbol{b}_1^{(\pm)}(t)=b_{11}^{(\pm)}(t)$ of the hidden layers of the components $\Phi^{(\pm)}$ as in (\ref{f1t}) and (\ref{f2t}) with respect to time $t$.
The singularities $z_1(t)$ and $z_2(t)$ satisfy 
\begin{align*}
\mathrm{Im} (z_1(t)) &<0 \quad \text{ for } \quad 0\leq t <\frac{1}{4} \quad \text{ and } \quad \mathrm{Im} (z_1(t))>0 \quad \text{ for } \quad \frac{1}{4}< t \leq 1, \\
\mathrm{Im} (z_2(t)) &>0 \quad \text{ for } \quad 0\leq t <\frac{1}{4} \quad \text{ and } \quad \mathrm{Im} (z_2(t))<0 \quad \text{ for } \quad \frac{1}{4}< t \leq 1.
\end{align*}
Thus, the neural network component $\Phi^{(+)}$ detects the pole $z_1(t)$ for $0\leq t <\frac{1}{4}$ (i.e., before the blow-up), and the neural network component $\Phi^{(-)}$ detects the pole $z_1(t)$ for $\frac{1}{4}< t \leq 1$ (i.e., after the blow-up), and vice versa for the pole $z_2(t)$.  Consequently, the weight $w_{11}^{(+)}(t)$ and the bias $b_{11}^{(+)}(t)$ are carrying information about $\mathrm{e}^{\mathrm{i} z_1(t)}$ for $0\leq t <\frac{1}{4}$ ($\mathrm{e}^{\mathrm{i} z_2(t)}$ for $\frac{1}{4}< t \leq 1$), and  the weight $w_{11}^{(-)}(t)$ and the bias $b_{11}^{(-)}(t)$ are carrying information about $\mathrm{e}^{\mathrm{i} z_1(t)}$ for $\frac{1}{4}< t \leq 1$ ($\mathrm{e}^{\mathrm{i} z_2(t)}$ 
 for $0\leq t <\frac{1}{4}$). We obtain the following numerical errors of the  computation of singularities $z_1(t)$ and $z_2(t)$ by 
(\ref{spl})--(\ref{spl1}) for $t=0, \, 0.4, \, 0.8$ 
\begin{align*}
|z_1(0)-s_1^{(+)}(0)|=2.2 \cdot 10^{-9}, & \quad   |z_2(0)-s_1^{(-)}(0)|=2.2 \cdot 10^{-9}; \\
|z_1(0.4)-s_1^{(-)}(0.4)|=2.9 \cdot 10^{-6}, & \quad   |z_2(0.4)-s_1^{(+)}(0.4)|=2.9 \cdot 10^{-6}; \\
    |z_1(0.8)-s_1^{(-)}(0.8)|=3.74  \cdot 10^{-16}, & \quad   |z_2(0.8)-s_1^{(+)}(0.8)|=2.8 \cdot 10^{-16}.
\end{align*}

 In Figure \ref{fig2}, we present weights (left) and biases (right) for time points $t=0, 0.1, 0.2,\dots,1$ and fixed parameters $C_{10}^{(\pm)}=1$, which together with the application of the same activation function with a real pole $z_0=-1.2$ implies $w_{11}^{(+)}(t)=w_{11}^{(-)}(t) \in \rr$  and $b_{11}^{(+)}(t)=b_{11}^{(-)}(t) \in \rr$. We observe that weights $w_{11}^{(\pm)}(t)$ increase  and biases $b_{11}^{(\pm)}(t)$  decrease for $0\leq t < \frac{1}{4}$, i.e. before the blow up, and weights $w_{11}^{(\pm)}(t)$  decrease and biases $b_{11}^{(\pm)}(t)$ increase for $\frac{1}{4}< t \leq 1$, i.e. after the blow up. This behavior can be simply explained by noticing that 
$\mathrm{e}^{\mathrm{i} z_1(t)}=\mathrm{e}^{-(t-1/4)}$ decreases and $\mathrm{e}^{\mathrm{i} z_2(t)}=\mathrm{e}^{(t-1/4)}$  increases and taking into account formulas (\ref{spl}) and (\ref{spl1}).
 Note also that a different choice of $C_{10}^{(\pm)} \in \rr$ or a pole $z_0 \in \rr$ does not change the behavior (only the numerical values) of the weights and the biases.

\section{Conclusions and plans for  future work}

We introduce a novel neural network construction that is designed to approximate functions with singularities. 
  Adaptive construction of the activation functions, a new backpropagation-free approach for training weights and biases of the hidden layers, and least-squares fitting for computation weights and biases in the output layers ensure accurate estimation  the locations of singularities of the functions under consideration and approximation in the rest of the domain. The capturing of pole-type singularities of the meromorphic functions is of particular interest to us. The weights and biases of the hidden layers are used to scale and shift, respectively, the poles of the activation functions to the estimated location of the singularities (see Figure \ref{figmain}).  That also implies that each neuron in our construction is carrying information only about a single singularity (see Figure \ref{nns}).   

  In this paper, we focused only on  pole-type singularities.  Following similar ideas, we can extend the approach to a neural network for the detection of  square root singularities. As described in Remark 3.2, our main concept of ``one singularity, one neuron'' would then not be kept. Consequently, in order to detect singularities of other types, such as square root singularities (or higher order), logarithmic branch points, or shocks using neural network-based approaches, different neural network structures must be developed. 

\section*{Statements and Declarations}
\textbf{Competing Interests:} The authors declare that they have no competing interests.

\section*{Acknowledgement}  F.D. is supported by the DFG project 468830823, by the TUM Georg Nemetschek Institute, and associated to DFG-SPP-229. The work of I.K. was partially supported by the US Department of Energy.

\small

\end{document}